\documentclass[11pt]{amsart}      
\usepackage[a4paper,hmargin={2.5cm,2.5cm},vmargin={2.5cm,2.5cm}]{geometry}
\usepackage{geometry}               

\usepackage{graphicx}

\usepackage[dvipsnames]{xcolor}

\usepackage{amssymb,amsmath,amsthm,amsfonts}

\allowdisplaybreaks

\usepackage[english]{babel}

\usepackage[utf8]{inputenc}

\usepackage[colorlinks]{hyperref}

\hypersetup{linkcolor=blue,citecolor=blue,filecolor=black,urlcolor=blue}

\usepackage{comment}

\usepackage{mathtools}

\usepackage{bigints}

\usepackage{dsfont}

\usepackage[sc]{mathpazo}

\usepackage{tikz}
\usepackage{caption}

\newtheorem{proposition}{Proposition}[section]

\newtheorem{theorem}[proposition]{Theorem}

\newtheorem{corollary}[proposition]{Corollary}

\newtheorem{lemma}[proposition]{Lemma}

\theoremstyle{definition}

\newtheorem{definition}[proposition]{Definition}

\theoremstyle{remark}

\newtheorem{remark}[proposition]{Remark}

\numberwithin{equation}{section}

\title[Asymptotics of nonlocal nonlinear Robin energies]{Asymptotics of nonlocal nonlinear Robin energies}

\author{Serena Dipierro, Giuseppe Spadaro, and Enrico Valdinoci}
 
\date{\today. {\bf SD \& EV:} 
Department of Mathematics and Statistics,
The University of Western Australia,
35 Stirling Highway,
Perth WA 6009, Australia.
{\bf GS:} Dipartimento di Matematica e Informatica,
Università della Calabria,
87036 Arcavacata di Rende,
Italia.\\ {\tt serena.dipierro@uwa.edu.au},
{\tt giuseppe.spadaro@unical.it},
{\tt enrico.valdinoci@uwa.edu.au}.}

\begin{document}


\begin{abstract}
We study the free minimization problem for a nonlinear, nonlocal functional associated with Robin-type nonlocal exterior conditions depending on a positive parameter $\alpha$.

We prove that, for every $\alpha>0$, the minimizer exists, is unique, and satisfies suitable decay properties at infinity. We also investigate the regularity of the maps $\alpha \mapsto u_\alpha$ and $\alpha \mapsto E_\alpha$, where~$u_\alpha$ denotes the minimizer and $E_\alpha$ the corresponding energy.

Finally, we derive asymptotic expansions of the energy as $\alpha\to+\infty$ and as $\alpha\to0^+$. The latter regime requires distinguishing between two cases, depending on whether the forcing term has zero mass. In one case, the limiting energy possesses a minimizer
and $E_\alpha$ converges to its energy, but in the other case $E_\alpha$ diverges to $-\infty$.
\end{abstract}
\maketitle

\noindent

{\footnotesize \textbf{2020 Mathematics Subject classification:} 45M05, 35R11. }

{\footnotesize 

}


{\footnotesize 
}

\section{Introduction}
\subsection{A nonlocal, nonlinear problem with Robin exterior data}
In this paper, we investigate a nonlocal Robin problem involving the fractional \(p\)-Laplacian and a nonlinear exterior condition, with particular emphasis on the asymptotic behavior of the associated energy as the Robin parameter varies.

Let $\Omega$ be a bounded open set in $\mathbb{R}^n$, with Lipschitz boundary. We focus on the study of the following nonlocal Robin problem
\begin{equation}\label{pb:RF}
    \begin{cases}
        (-\Delta)_p^s u = f & \text{in } \Omega,\\
        \alpha |u|^{q-2}u + N^p_s u =0 & \text{in } \mathbb{R}^n \setminus \overline{\Omega},
    \end{cases}
\end{equation}
where $\alpha$ is a given positive  parameter, $s\in(0,1)$,
\begin{equation*}
\begin{split}
    (-\Delta)_p^s u(x) &:= c_{n,s,p} \ PV \int_{\mathbb{R}^n} \frac{|u(x)-u(y)|^{p-2}(u(x)-u(y))}{|x-y|^{n+sp}} \ dy\\
    &=c_{n,s,p} \lim_{\varepsilon \rightarrow 0^+}\int_{\mathbb{R}^n\setminus
    B_\varepsilon(x)} \frac{|u(x)-u(y)|^{p-2}(u(x)-u(y))}{|x-y|^{n+sp}} \ dy
\end{split}
\end{equation*}
is the fractional $p$-Laplace operator and $N^p_s$ stands for the nonlocal normal $p$-derivative associated with the fractional $p$-Laplacian (see~\cite{DRV,MP}), namely
\begin{equation}\label{N_nonlocal}
    N^p_su(x): = c_{n,s,p} \int_\Omega \frac{|u(x)-u(y)|^{p-2}(u(x)-u(y))}{|x-y|^{n+sp}} \ dy, \qquad {\mbox{for all }} x \in \mathbb{R}^n \setminus \overline{\Omega}.
\end{equation}
The parameters $p > 1$ and $q > 1$ determine the nonlinear nature of the problem, both inside the domain $\Omega$ and in the exterior region $\mathbb{R}^n \setminus \overline{\Omega}$.

Given a measurable function $u : \mathbb{R}^n \rightarrow \mathbb{R}$, we set
\begin{equation}\label{NORMA}\begin{split}
[u]_{W^{s,p}(\Omega)}&:=\left(
 \int_{\mathbb{R}^{2n}\setminus (\mathbb{R}^n\setminus \Omega)^2} \frac{|u(x)-u(y)|^p}{|x-y|^{n+sp}} \ dxdy\right)^{\frac1p}\\
 {\mbox{and }}\quad
    \|u\|_{W^{s,p}(\Omega)} &:= \left(\|u\|_{L^p(\Omega)}^p  +
    [u]_{W^{s,p}(\Omega)}^p\right)^{\frac1p}\end{split}
\end{equation}
and we define
\begin{equation}\label{space}
    W^{s,p}(\Omega) := \{u:\mathbb{R}^n \rightarrow \mathbb{R} \ \text{ measurable}: \|u\|_{W^{s,p}(\Omega)} < +\infty\}.
\end{equation}
We further consider the following functional space, which is natural in the context of our framework,
\begin{equation*}
    \mathcal{W}^{s,p} := W^{s,p}(\Omega) \cap L^q(\mathbb{R}^n \setminus \Omega),
\end{equation*}
endowed with the norm
\begin{equation*}
    \|u\|_{\mathcal{W}^{s,p}} := \|u\|_{W^{s,p}(\Omega)} + \|u\|_{L^q(\mathbb{R}^n \setminus \Omega)}.
\end{equation*}

Furthermore, we will adopt the following definition of weak solution in this setting.
\begin{definition}\label{DEBOLE}
    Let $f\in (\mathcal{W}^{s,p})^*$. A function $u \in \mathcal{W}^{s,p}$ is a weak solution of \eqref{pb:RF} if it solves
    \begin{equation*}
    \begin{split}
       &\frac{c_{n,s,p}}{2} \int_{\mathbb{R}^{2n}\setminus (\mathbb{R}^n\setminus \Omega)^2} \frac{|u(x)-u(y)|^{p-2}(u(x)-u(y))(\varphi(x)-\varphi(y))}{|x-y|^{n+sp}} \ dxdy + \alpha \int_{\mathbb{R}^n \setminus \Omega} |u|^{q-2}u\varphi \ dx\\
       &\qquad\qquad= \int_\Omega f \varphi \ dx,
    \end{split}
    \end{equation*}
    for all $\varphi \in \mathcal{W}^{s,p}$.
\end{definition}

We recall that the notation~$(\mathcal{W}^{s,p})^*$ stands for the
dual space of~$\mathcal{W}^{s,p}$.
Also, for clarity, we note that, since the source term $f$ belongs to~$(\mathcal{W}^{s,p})^*$, the integral $\int_\Omega f \varphi \, dx$ should be understood as the duality pairing between $f$ and $\varphi$ in $\mathcal{W}^{s,p}$.

In this work, we study the problem from a variational perspective by introducing a suitable energy functional, whose critical points correspond to weak solutions of \eqref{pb:RF}, namely
\begin{equation}\label{FUNCTIONAL}
    J_\alpha(u) := \frac{c_{n,s,p}}{2p} \int_{\mathbb{R}^{2n}\setminus (\mathbb{R}^n\setminus \Omega)^2} \frac{|u(x)-u(y)|^p}{|x-y|^{n+sp}} \ dxdy + \frac{\alpha}{q} \int_{\mathbb{R}^n \setminus \Omega} |u|^q \ dx - \int_\Omega f u \ dx,
\end{equation}
for every $u \in \mathcal{W}^{s,p}$.

The main goal of this paper is to analyze the asymptotic behavior of the minimal energy level
\begin{equation}\label{EALFADE}
E_\alpha := \inf_{u \in \mathcal{W}^{s,p}} \left\{J_\alpha (u)\right\},
\end{equation}
as the parameter \(\alpha\) varies. 
One could expect that, if \(\alpha \to +\infty\), the term~$\alpha |u|^{q-2}u$
in the exterior condition of~\eqref{pb:RF}
dominates and the condition effectively imposes a Dirichlet-type constraint on the exterior domain, whereas if $\alpha \to 0^+$, the influence of this  term becomes negligible and, under the compatibility assumption $\int_\Omega f dx =0$, the Robin condition approaches a Neumann-type behavior.

Our goal is to describe rigorously how the relative strength of the two terms in the Robin condition determines the limiting regimes and how the nonlocal operator $N^p_s u$ affects the solution in the exterior domain. We stress that our results are also new in the linear case~$p=q=2$. Indeed, to the best of our knowledge, no analogous results are available in the nonlocal framework, even for quadratic energies.

In fact, as far as we know, problem~\eqref{pb:RF} was not considered before, and
the asymptotic behavior of Robin-type energies in the limits $\alpha \to 0^+$ and $\alpha \to +\infty$ has so far been investigated only in the local setting, see~\cite{BBBT, BD, BO}. In particular, in~\cite{BO}, the authors consider nonlinear Robin energies of the form
\begin{equation*}
    \mathcal{E}_\alpha(u) = \frac{1}{p}\int_\Omega |\nabla u|^p dx + \frac{\alpha}{q}\int_{\partial \Omega} |u|^q \ d \mathcal{H}^{n-1} - \int_\Omega fu \ dx,
\end{equation*}
and study their asymptotic behavior as the parameter $\alpha$ varies. They show that, in the limits $\alpha \to 0^+$ and $\alpha \to +\infty$, the corresponding energies converge to the Neumann and Dirichlet energies respectively, and they further derive a first-order asymptotic expansion of the minimum energy in both regimes.

In this spirit, our paper can be seen as the nonlocal counterpart of the analysis in~\cite{BO}, though the presence of the nonlocal operator here results in an analysis that deviates from the classical case.
Below, we summarize the main results of this paper.

\subsection{The unique solution for~$\alpha>0$}
To start with, we point out that problem~\eqref{pb:RF} is mathematically coherent:

\begin{theorem}\label{PF:MA1}
For any $\alpha >0$, there exists a unique weak solution~$u_\alpha$ of \eqref{pb:RF}.

Moreover:
\begin{enumerate}
\item The function~$u_\alpha$ realizes the infimum in~\eqref{EALFADE}.
\item If~$q\geq p$, then
\begin{equation}\label{dePSK:DE}
    \limsup_{|x|->+\infty} |x|^\frac{n+sp}{q-1}|u_\alpha(x)| < +\infty
\end{equation}
and
  \begin{equation}\label{LINTE}
        \int_{\mathbb{R}^n \setminus \Omega} |u_\alpha|^{q-2}u_\alpha \ dx= \frac{1}{\alpha} \int_\Omega f \ dx.
    \end{equation}
    \item The map $\alpha \mapsto u_\alpha$ is continuous from $(0,+\infty)$ to $\mathcal{W}^{s,p}$.
    \item The map $\alpha \mapsto E_\alpha$ is of class $C^1$ and
    \begin{equation}\label{2PSK:-1eidk}
        \frac{d E_\alpha}{d\alpha} = \frac{1}{q} \int_{\mathbb{R}^n\setminus \Omega} |u_\alpha|^q \ dx.
    \end{equation}
\end{enumerate}
\end{theorem}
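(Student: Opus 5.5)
We argue variationally, via the direct method. The functional $J_\alpha$ is strictly convex on $\mathcal{W}^{s,p}$. The first two terms are convex, and the last is linear. Strict convexity holds because equality in the convexity of $t\mapsto|t|^p$ and $t\mapsto |t|^q$ along a segment $u+\lambda(v-u)$ forces $v-u$ to be constant on $\mathbb{R}^{2n}\setminus(\mathbb{R}^n\setminus\Omega)^2$ differences, hence constant on $\mathbb{R}^n$. The strict convexity of $|t|^q$ on the infinite-measure set $\mathbb{R}^n\setminus\Omega$ then forces this constant to vanish.

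\textbf{Coercivity.} The key step is the inequality
\[
\|u\|_{\mathcal W^{s,p}}\to\infty \ \Longrightarrow\ [u]^p_{W^{s,p}(\Omega)}+\|u\|^q_{L^q(\mathbb{R}^n\setminus\Omega)}\to\infty ,
\]
together with a Poincaré-type bound
\[
\|u\|_{L^p(\Omega)}\le C\big([u]_{W^{s,p}(\Omega)}+\|u\|_{L^q(A)}\big),
\]
where $A\subset\mathbb{R}^n\setminus\Omega$ is a bounded set at positive distance from $\Omega$. I would prove this by comparing $u(x)$, for $x\in\Omega$, with $u(y)$, for $y\in A$: the kernel is bounded below on $\Omega\times A$, so integrating $|u(x)|^p\lesssim|u(x)-u(y)|^p+|u(y)|^p$ over $y\in A$ gives the bound, using $L^q(A)\subset L^1(A)$ and Jensen when $q<p$. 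The growth is then superlinear in the norm, while $|\int fu|\le\|f\|_*\|u\|$ is linear, so $J_\alpha$ is coercive.

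\textbf{Existence and uniqueness.} Since $\mathcal W^{s,p}$ is reflexive (it embeds isometrically as a closed subspace of a product of $L^p$ and $L^q$ spaces), minimizing sequences have weak limits. $J_\alpha$ is weakly lower semicontinuous, being convex and continuous. This gives a minimizer $u_\alpha$, which by Gateaux differentiation is a weak solution. Conversely, every weak solution is a critical point of a convex functional and hence a minimizer, so strict convexity yields uniqueness. This proves existence, uniqueness and item (1).

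\textbf{Item (2).} Testing the weak formulation with $\varphi\equiv1$ is not admissible, since $1\notin L^q(\mathbb{R}^n\setminus\Omega)$. Instead, I would use $\varphi_R=\eta(x/R)$ with $\eta$ a cutoff, and pass to $R\to\infty$. This needs $|u_\alpha|^{q-1}\in L^1(\mathbb{R}^n\setminus\Omega)$, which follows from the decay estimate \eqref{dePSK:DE} because $(n+sp)>n$. For the decay itself, the exterior equation reads pointwise
\[
\alpha|u|^{q-2}u(x)=-c_{n,s,p}\int_\Omega \frac{|u(x)-u(y)|^{p-2}(u(x)-u(y))}{|x-y|^{n+sp}}\,dy ,
\]
obtained by choosing test functions supported outside $\overline\Omega$. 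For $|x|$ large this gives
\[
\alpha|u(x)|^{q-1}\le C|x|^{-n-sp}\int_\Omega\big(|u(x)|+|u(y)|\big)^{p-1}dy .
\]
A preliminary step is to show that $u$ is bounded at infinity. One route is to note that the equation forces $|u(x)|^{q-1}\lesssim |u(x)|^{p-1}|x|^{-n-sp}+C|x|^{-n-sp}$; since $q\ge p$, when $|u(x)|\ge 1$ the first term is absorbed for $|x|$ large (or when $q=p$, the factor $|x|^{-n-sp}$ is small compared with $\alpha$). This yields $|u(x)|^{q-1}\le C|x|^{-n-sp}$. The commuting of the limit with the nonlocal term is handled with dominated convergence, since $|\varphi_R(x)-\varphi_R(y)|\to0$ and $\varphi_R=1$ on $\Omega$ for large $R$. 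I expect this limit, and the pointwise use of the exterior equation, to be the main technical obstacle.

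\textbf{Items (3) and (4).} For continuity, take $\alpha_k\to\alpha$. The energies $J_{\alpha_k}(u_{\alpha_k})\le J_{\alpha_k}(0)=0$ give uniform bounds, so a subsequence converges weakly to some $v$. Lower semicontinuity and the inequality $J_{\alpha_k}(u_{\alpha_k})\le J_{\alpha_k}(u_\alpha)$ give $v=u_\alpha$ together with convergence of the energies. Convergence of the norms then follows: each convex piece separately converges, by lower semicontinuity of each piece. Uniform convexity of $L^p$ and $L^q$ upgrades this to strong convergence. For differentiability, comparing minimizers gives
\[
(\beta-\alpha)\tfrac1q\|u_\beta\|_q^q\le E_\beta-E_\alpha\le(\beta-\alpha)\tfrac1q\|u_\alpha\|_q^q .
\]
Dividing by $\beta-\alpha$ and using item (3) yields \eqref{2PSK:-1eidk}, and the continuity of the right-hand side gives $C^1$.
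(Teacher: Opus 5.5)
Your proposal is correct. Its skeleton matches the paper's: the direct method with a Poincaré-type inequality, the pointwise exterior equation plus absorption for the decay, cutoff test functions for \eqref{LINTE}, convergence of each energy piece for item (3), and two-sided comparison of minimizers for item (4). The differences lie in the key lemmas.

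For uniqueness, the paper proves a comparison principle (Theorem~\ref{THM:UNI}) by testing with $(u_2-u_1)^-$. You instead observe that weak solutions are exactly the critical points of the convex functional $J_\alpha$, hence minimizers, so strict convexity suffices. Your route is shorter and avoids the case analysis on $\{u_2<u_1\}$; the paper's route also yields maximum and comparison principles of independent interest.

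For \eqref{INEQ}, the paper argues by contradiction through the compact embedding of $W^{s,p}(\Omega)$ into $L^p(\Omega)$. Your averaging over a bounded $A\subset\mathbb{R}^n\setminus\Omega$ gives an explicit constant and needs no compactness or boundary regularity. Applied to $u_{\alpha_k}-u_\alpha$, it also supplies the $L^p(\Omega)$ part of the strong convergence in item (3). You should state this, since Radon--Riesz only handles the Gagliardo and $L^q$ parts. For $q<p$, average $|u(x)|\le|u(x)-u(y)|+|u(y)|$ over $y\in A$ before taking $p$-th powers, applying Jensen only to the difference term. Integrating the $p$-th power inequality directly would leave $\|u\|_{L^p(A)}$, which $\|u\|_{L^q(A)}$ does not control.

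In item (3), your a priori bound $J_{\alpha/2}(u_{\alpha_k})\le J_{\alpha_k}(u_{\alpha_k})\le J_{\alpha_k}(0)=0$ replaces the paper's use of Corollary~\ref{LIP}. Together with $J_{\alpha_k}(u_{\alpha_k})\le J_{\alpha_k}(u_\alpha)$, it identifies the limit without needing continuity of $E_\alpha$.

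One detail in the proof of \eqref{LINTE}: the dominating function for the mixed term must use that $1-\varphi_R$ vanishes on $B_R$, not merely on $\Omega$. For a general $u\in\mathcal{W}^{s,p}$, the function $|u(x)-u(y)|^{p-1}|x-y|^{-n-sp}$ need not be integrable on $\Omega\times(\mathbb{R}^n\setminus\Omega)$ when $sp>1$, because of the singularity at $\partial\Omega$. It is integrable on $\Omega\times(\mathbb{R}^n\setminus B_{R_1})$ by H\"older. The paper sidesteps dominated convergence here, using H\"older and the bound $I_R\le CR^{-sp}$ directly.
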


We will devote the forthcoming Section~\ref{P}
to the proof of Theorem~\ref{PF:MA1}
(the simple, but technical, proof that any minimizer $u_\alpha$ of the functional $J_\alpha$ satisfies \eqref{pb:RF} in the sense of Definition~\ref{DEBOLE} being deferred to Appendix~\ref{A}
for the convenience of the reader).

We also investigate the asymptotic behavior of $E_\alpha$ as $\alpha \to +\infty$. In particular, we show that the functional $E_\alpha$ converges to the limiting energy
\begin{equation*}
    E_\infty : = \inf_{u\in W^{s,p}_0(\Omega)} \left\{ \frac{c_{n,s,p}}{2p} \int_{\mathbb{R}^{2n}\setminus (\mathbb{R}^n\setminus \Omega)^2} \frac{|u(x)-u(y)|^p}{|x-y|^{n+sp}} \ dxdy - \int_\Omega f u \ dx\right\},
\end{equation*}
whose unique minimizer $u_\infty \in W^{s,p}_0(\Omega)$ weakly solves the nonlocal Dirichlet problem
    \begin{equation*}
    \begin{cases}
        (-\Delta)_p^s u_\infty = f & \text{in } \Omega,\\
        u_\infty =0 & \text{in } \mathbb{R}^n \setminus \overline \Omega.
    \end{cases}
\end{equation*}
In fact, we prove a sharper and more quantitative result, by establishing the following asymptotic expansion of the energy $E_\alpha$ as $\alpha \rightarrow +\infty$.

\begin{theorem}\label{D_exp}
    Assume that $N^p_s u_\infty \in L^\frac{q}{q-1}( \mathbb{R}^n \setminus \Omega)$. Then,
    \begin{equation*}
        E_\alpha=E_\infty + \alpha^{-\frac{1}{q-1}}\frac{1-q}{q} \int_{\mathbb{R}^n \setminus \Omega}|N^p_su_\infty|^\frac{q}{q-1} \ dx + o(\alpha^{-\frac{1}{q-1}}), \qquad \text{as } \alpha \rightarrow + \infty.
    \end{equation*}
\end{theorem}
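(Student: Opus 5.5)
Write $c=c_{n,s,p}$, $\beta:=\alpha^{-\frac{1}{q-1}}$, $g:=N^p_s u_\infty\in L^{\frac{q}{q-1}}(\mathbb{R}^n\setminus\Omega)$ and $K:=\frac{1-q}{q}\int_{\mathbb{R}^n\setminus\Omega}|g|^{\frac{q}{q-1}}dx$. I will prove a matching upper and lower bound on $E_\alpha-E_\infty$, each equal to $\beta K+o(\beta)$. The guiding heuristic is that for large $\alpha$ the minimizer behaves like $u_\infty+\beta w$. Here $w$ vanishes in $\Omega$ and, outside $\Omega$, solves the pointwise Robin balance $|w|^{q-2}w=-g$, that is, $w=-|g|^{\frac{1}{q-1}-1}g=-|g|^{\frac{2-q}{q-1}}g$. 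Note that $w\in L^q(\mathbb{R}^n\setminus\Omega)$, since $|w|^q=|g|^{\frac{q}{q-1}}$.

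\textbf{Key identity.} First I would record the Euler--Lagrange equation of $u_\infty$ tested against a general $\varphi\in\mathcal{W}^{s,p}$, not only against $W^{s,p}_0(\Omega)$ functions. Let $A(v,\varphi):=\frac{c}{2}\iint_{\mathbb{R}^{2n}\setminus(\mathbb{R}^n\setminus\Omega)^2}|v(x)-v(y)|^{p-2}(v(x)-v(y))(\varphi(x)-\varphi(y))|x-y|^{-n-sp}$. Splitting the double integral into $\Omega\times\Omega$, $\Omega\times\Omega^c$ and $\Omega^c\times\Omega$, and using $u_\infty=0$ outside $\Omega$, one gets the nonlocal integration by parts formula
\[
A(u_\infty,\varphi)=\int_\Omega f\varphi\,dx+\int_{\mathbb{R}^n\setminus\Omega} g\,\varphi\,dx .
\]
This step also fixes the constant normalisations between $N^p_s$ and $(-\Delta)^s_p$.

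\textbf{Upper bound.} I test $J_\alpha$ on $v=u_\infty+t\beta w$ and then optimise over $t$. The difficulty is that $w$ need not be regular enough for $[u_\infty+t\beta w]_{W^{s,p}(\Omega)}$ to be finite, or for a Taylor expansion to be valid. I would therefore first truncate $w$ to a function $w_k\in\mathcal{W}^{s,p}$: bounded, supported in a compact set at positive distance from $\partial\Omega$, and such that $w_k\to w$ in $L^q(\mathbb{R}^n\setminus\Omega)$. For such $w_k$ the interaction terms $\Omega\times\Omega^c$ are smooth in $\varepsilon$, so convexity gives
\[
\frac{c}{2p}[u_\infty+\varepsilon w_k]^p\le \frac{c}{2p}[u_\infty]^p+\varepsilon A(u_\infty,w_k)+o(\varepsilon).
\]
Taking $\varepsilon=\beta$ and using the key identity (with $w_k=0$ in $\Omega$) yields
\[
E_\alpha\le E_\infty+\beta\Big(\int g\,w_k+\frac1q\int|w_k|^q\Big)+o(\beta),
\]
because $\alpha\beta^q/q=\beta/q$. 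Letting $k\to\infty$, the bracket tends to $\int g w+\frac1q\int|w|^q=(-1+\frac1q)\int|g|^{\frac{q}{q-1}}=K$.

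\textbf{Lower bound (main step).} For any $u\in\mathcal{W}^{s,p}$, convexity of the seminorm term gives $\frac{c}{2p}[u]^p\ge\frac{c}{2p}[u_\infty]^p+A(u_\infty,u-u_\infty)$. Together with the key identity and $E_\infty=\frac{c}{2p}[u_\infty]^p-\int_\Omega fu_\infty$, this gives
\[
J_\alpha(u)\ge E_\infty+\int_{\mathbb{R}^n\setminus\Omega}\Big(g\,u+\frac{\alpha}{q}|u|^q\Big)dx\ge E_\infty+\beta K .
\]
The last inequality comes from minimising pointwise in the value $u(x)$ (Young/Fenchel conjugate): $\min_{\tau}\big(g\tau+\frac{\alpha}{q}|\tau|^q\big)=-\frac{q-1}{q}\alpha^{-\frac{1}{q-1}}|g|^{\frac{q}{q-1}}$. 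So the lower bound is actually exact, with no $o$ term.

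The main obstacle is therefore only the upper bound. It requires justifying the first-order expansion of the $p$-seminorm along the perturbation $\beta w_k$ with an error that is $o(\beta)$ uniformly enough. A cleaner route is to choose $k=k(\alpha)\to\infty$ slowly via a diagonal argument, or to avoid Taylor expansion altogether: bound $\frac{1}{\varepsilon}\big([u_\infty+\varepsilon w_k]^p-[u_\infty]^p\big)$ by dominated convergence, using that $w_k$ is bounded and supported away from $\Omega$, so the kernel is integrable on $\Omega\times\operatorname{supp}w_k$.
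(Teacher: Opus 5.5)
Your proof is correct and follows the paper's argument. The lower bound is exactly the paper's: convexity of $t\mapsto|t|^p$ combined with the integration-by-parts identity, followed by pointwise minimization of $g\tau+\frac{\alpha}{q}|\tau|^q$. The upper bound, like the paper's, tests with $u_\infty+\alpha^{-\frac{1}{q-1}}w_k$ for approximations $w_k$ of $-|N^p_su_\infty|^{\frac{2-q}{q-1}}N^p_su_\infty$; the only difference is that you control the remainder by dominated convergence, where the paper uses explicit Taylor bounds on $g_\alpha$.

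Two small points. No uniformity in $k$ or diagonal choice $k=k(\alpha)$ is needed: for fixed $k$, take $\limsup_{\alpha\to+\infty}$ of $\alpha^{\frac{1}{q-1}}(E_\alpha-E_\infty)$, and only then let $k\to\infty$, as the paper does. Also, the inequality ``$\le$'' with an $o(\varepsilon)$ error in your upper bound comes from differentiability, not from convexity.
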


The proof of Theorem~\ref{D_exp} is contained in Section~\ref{D}.

We also study the behavior of the functional $E_\alpha$ as $\alpha \to 0^+$. 
To this end, we need to distinguish two cases, depending 
on whether or not the mass of the forcing term is zero.

More specifically, under the compatibility condition
\begin{equation}\label{COMPACO}
    \int_\Omega f  \ dx =0,
\end{equation}
the minimization problem
\begin{equation*}
    E_0 :=\inf_{u \in W^{s,p}(\Omega)} \left\{ \frac{c_{n,s,p}}{2p} \int_{\mathbb{R}^{2n}\setminus (\mathbb{R}^n\setminus \Omega)^2} \frac{|u(x)-u(y)|^p}{|x-y|^{n+sp}} \ dxdy - \int_\Omega f u \ dx\right\}
\end{equation*}
is attained by a function $u_0 \in W^{s,p}(\Omega)$, which is unique up to additive constants
and weakly solves the nonlocal Neumann problem
\begin{equation*}
    \begin{cases}
        (-\Delta)_p^s u_0 = f & \text{in } \Omega,\\
        N^p_s u_0 =0 & \text{in } \mathbb{R}^n \setminus \overline \Omega.
    \end{cases}
\end{equation*}
In this situation,
we show that $E_\alpha$ converges to~$E_0$, according to the following result:

\begin{theorem}\label{N_main}
Assume~\eqref{COMPACO}.
  Then,
    \begin{equation*}
        \lim_{\alpha \rightarrow 0^+} E_\alpha = E_0.
    \end{equation*}
\end{theorem}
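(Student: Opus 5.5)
We argue by a lower bound and an upper bound, using the monotonicity of $\alpha\mapsto E_\alpha$.

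\textbf{Upper bound.} Let $u_0$ be a minimizer of $E_0$. It is natural to use it as a competitor in $J_\alpha$, but $u_0$ need not lie in $L^q(\mathbb{R}^n\setminus\Omega)$.

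\begin{itemize}
\item[(i)] First we regularize. By density (or truncation), for every $\varepsilon>0$ we pick $v\in\mathcal{W}^{s,p}$ with
\[
\frac{c_{n,s,p}}{2p}[v]^p_{W^{s,p}(\Omega)}-\int_\Omega fv\,dx\le E_0+\varepsilon .
\]
One way to build $v$: truncate $u_0$ at levels $\pm k$, which does not increase the seminorm and converges in $W^{s,p}(\Omega)$, and then multiply by a cutoff $\eta_R$ equal to $1$ on $B_R\supset\Omega$. The Gagliardo contribution of the cutoff tends to zero as $R\to\infty$. This holds because pairs $(x,y)$ with $x\in\Omega$ and $y$ far away carry weight $|x-y|^{-n-sp}$, which is integrable, and the truncated function is bounded.
\item[(ii)] Since $v\in L^q(\mathbb{R}^n\setminus\Omega)$, we get
\[
E_\alpha\le J_\alpha(v)\le E_0+\varepsilon+\frac{\alpha}{q}\|v\|_{L^q(\mathbb{R}^n\setminus\Omega)}^q .
\]
Hence $\limsup_{\alpha\to0^+}E_\alpha\le E_0$.
\end{itemize}

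\textbf{Lower bound.} This is where the compatibility condition \eqref{COMPACO} enters.

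\begin{itemize}
\item[(i)] For any $u\in\mathcal{W}^{s,p}$, the exterior term is nonnegative. Moreover, $u$ is admissible for $E_0$ because $\mathcal{W}^{s,p}\subset W^{s,p}(\Omega)$. Therefore $J_\alpha(u)\ge E_0$, and so $E_\alpha\ge E_0$ for all $\alpha>0$.
\item[(ii)] The role of \eqref{COMPACO} is to guarantee that $E_0$ is finite. Because $\int_\Omega f\,dx=0$, the linear term is invariant under adding constants. A Poincaré--Wirtinger inequality in $W^{s,p}(\Omega)$ then gives
\[
\int_\Omega fu\,dx=\int_\Omega f\,(u-\bar u)\,dx\le C\,\|f\|\,[u]_{W^{s,p}(\Omega)},
\]
where $\bar u$ is the mean of $u$ on $\Omega$. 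Since the seminorm enters $J$ with power $p>1$, coercivity follows. Finiteness and attainment of $E_0$ are stated in the paper.
\item[(iii)] One subtlety: $f\in(\mathcal{W}^{s,p})^*$ has to make sense on $W^{s,p}(\Omega)$. We assume, as the paper does implicitly, that $f$ acts on $W^{s,p}(\Omega)$, so that $\int_\Omega fu\,dx$ is well defined and step (ii) applies.
\end{itemize}

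\textbf{Conclusion.} Combining the two bounds gives $E_0\le E_\alpha$ for every $\alpha>0$ and $\limsup_{\alpha\to0^+}E_\alpha\le E_0$, so $\lim_{\alpha\to0^+}E_\alpha=E_0$. Alternatively, $E_\alpha$ is nondecreasing in $\alpha$ by \eqref{2PSK:-1eidk}, so the limit exists; still, the squeeze argument suffices on its own.

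\textbf{Main obstacle.} The hardest step is the approximation in the upper bound. We must make sure that truncation followed by cutoff really produces $v\in L^q(\mathbb{R}^n\setminus\Omega)$ while keeping both the energy and the pairing $\int_\Omega fv\,dx$ close to their values at $u_0$. For the pairing this requires continuity of $f$ with respect to the $W^{s,p}(\Omega)$ norm, and the tail estimate for the cutoff must be carried out carefully.
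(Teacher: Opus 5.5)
Your proof is correct and follows essentially the same route as the paper. You get $E_0\le E_\alpha$ by dropping the nonnegative exterior term, and $\limsup_{\alpha\to0^+}E_\alpha\le E_0$ by testing $J_\alpha$ with truncated, cut-off approximations of $u_0$ in $\mathcal{W}^{s,p}$, sending $\alpha\to0^+$ first and the approximation parameters afterwards. Your variant, which truncates at $\pm k$ independently of the cutoff radius $R$, is slightly cleaner than the paper's $\min\{u_0,R\}\eta_R$ in Lemma~\ref{convCO}: boundedness makes $v\in L^q(\mathbb{R}^n\setminus\Omega)$ immediate (the paper's one-sided truncation does not obviously guarantee this when $u_0$ is unbounded below), and it reduces the cutoff error to the elementary bound $Ck^p|\Omega|R^{-sp}$.
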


If, instead, the compatibility condition in~\eqref{COMPACO}
is not satisfied, we prove that the energy $E_\alpha$ becomes unbounded from below as $\alpha \to 0^+$, namely:

\begin{theorem}\label{int_f_div}
Assume that
$$    \int_\Omega f  \ dx \ne0.$$
Then,
    \begin{equation*}
        \lim_{\alpha \rightarrow 0^+} E_\alpha = - \infty.
    \end{equation*}
\end{theorem}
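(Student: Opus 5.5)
Since $E_\alpha$ is an infimum, it suffices to show that for every $M>0$ there is a competitor $v\in\mathcal{W}^{s,p}$ with $J_\alpha(v)\le -M$ once $\alpha$ is small. The natural competitors are functions that are constant in $\Omega$, because they exploit the nonzero mass of $f$ while keeping the Gagliardo term under control. There is one obstruction: a function equal to a constant $t$ on all of $\mathbb{R}^n$ is not admissible, since it does not lie in $L^q(\mathbb{R}^n\setminus\Omega)$. The exterior term must therefore be truncated, and this truncation produces an interaction cost that we have to track.

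We fix a cutoff $\psi\in C^\infty_c(\mathbb{R}^n)$ with $0\le\psi\le1$ and $\psi\equiv1$ on a neighborhood of $\overline\Omega$. Then $\psi\in\mathcal{W}^{s,p}$, since it is Lipschitz with compact support and $\Omega$ is bounded. For $t\in\mathbb{R}$ we set $v=t\psi$. Because $\psi\equiv 1$ on $\Omega$, we get $\int_\Omega f v\,dx=t\int_\Omega f\,dx$, which is the duality pairing of $f$ with $t\psi$. By homogeneity,
\begin{equation*}
J_\alpha(t\psi)=\frac{c_{n,s,p}}{2p}|t|^p[\psi]^p_{W^{s,p}(\Omega)}+\frac{\alpha}{q}|t|^q\|\psi\|^q_{L^q(\mathbb{R}^n\setminus\Omega)}-t\int_\Omega f\,dx.
\end{equation*}
The key simplification is that $[\psi]_{W^{s,p}(\Omega)}=0$. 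Indeed, the domain of integration $\mathbb{R}^{2n}\setminus(\mathbb{R}^n\setminus\Omega)^2$ only contains pairs with at least one point in $\Omega$. If $\psi\equiv1$ on $\Omega$ but $\psi\not\equiv 1$ outside, this seminorm does \emph{not} vanish, but it is finite, say $A:=[\psi]^p_{W^{s,p}(\Omega)}$; it vanishes only if $\psi$ is constant. It is finite because $\psi$ equals $1$ on a neighborhood $\Omega_\delta$ of $\overline\Omega$. Pairs with $x\in\Omega$ and $y\in\Omega_\delta$ contribute nothing, and the remaining pairs satisfy $|x-y|\ge\delta$, so their contribution is bounded by $2^p|\Omega|\int_{|z|\ge\delta}|z|^{-n-sp}dz<\infty$.

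We now choose the sign of $t$ so that $t\int_\Omega f>0$, and write $t=\sigma\tau$ with $\tau>0$ and $\sigma=\operatorname{sign}\int_\Omega f$. With $B:=\|\psi\|^q_{L^q(\mathbb{R}^n\setminus\Omega)}$ and $m:=|\int_\Omega f|>0$, this gives
\begin{equation*}
E_\alpha\le J_\alpha(\sigma\tau\psi)=\frac{c_{n,s,p}}{2p}A\tau^p+\frac{\alpha}{q}B\tau^q-m\tau .
\end{equation*}
The term $A\tau^p$ is the main obstacle when $p\le1$ fails, since for $p>1$ it eventually beats the linear gain. We fix it by making $A$ small relative to the scale: we replace $\psi$ by $\psi_R(x)=\psi(x/R)$ with $R\to\infty$, so that $\psi_R\equiv1$ on $B_{R}\supset\Omega$. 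The estimate above then gives $A_R\lesssim|\Omega|\,R^{-sp}$, while $B_R\lesssim R^n$.

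The last step is the choice of parameters. We take $\tau$ large and then $R$ large enough that $\frac{c_{n,s,p}}{2p}A_R\tau^p\le m\tau/4$. Finally we choose $\alpha_0$ so small that $\frac{\alpha}{q}B_R\tau^q\le m\tau/4$ for all $\alpha<\alpha_0$. This yields $E_\alpha\le -m\tau/2$ for every $\alpha<\alpha_0$. Since $\tau$ is arbitrary, we conclude $\lim_{\alpha\to0^+}E_\alpha=-\infty$; the limit exists because $E_\alpha$ is nondecreasing in $\alpha$, as $J_\alpha$ is pointwise nondecreasing in $\alpha$.
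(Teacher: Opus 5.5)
Your argument is correct and follows essentially the paper's route. You use the same competitors $t\eta_R$ with $\eta_R\equiv1$ on $B_R\supset\Omega$, and the same bounds $[\eta_R]^p_{W^{s,p}(\Omega)}\lesssim|\Omega|R^{-sp}$ and $\|\eta_R\|^q_{L^q(\mathbb{R}^n\setminus\Omega)}\lesssim R^n$. The only difference is the order of choices: you fix $\tau$, then $R$, then $\alpha_0$, whereas the paper ties $t$ and $R$ to explicit powers of $\alpha$, treating $p=q$ and $p\neq q$ separately. That costs more exponent bookkeeping but also gives an explicit rate of divergence.

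Two presentation fixes. Delete the sentence claiming $[\psi]_{W^{s,p}(\Omega)}=0$; you contradict it immediately and never use it. Also take $\psi\equiv1$ on $B_1$, rather than merely near $\overline\Omega$, so that $\psi_R\equiv1$ on $B_R$ as you later assert.
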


We will present in Section~\ref{N} the proofs of Theorems~\ref{N_main} and~\ref{int_f_div}.

\section{Preliminary results and proof of Theorem~\ref{PF:MA1}}\label{P}
In this section, we establish several properties of weak solutions
to~\eqref{pb:RF}, along with properties of the map \(\alpha \mapsto E_\alpha\), thus establishing
Theorem~\ref{PF:MA1}.

The following results establish a weak maximum principle for weak supersolutions of \eqref{pb:RF} and a weak comparison principle for weak subsolutions and supersolutions of \eqref{pb:RF}.

\begin{theorem}\label{TPM}
    Let $u \in \mathcal{W}^{s,p}$ weakly satisfy
    \begin{equation}\label{MAX}
        \begin{cases}
            (-\Delta)_p^su \geq 0 & \text{ in } \Omega,\\
            \alpha |u|^{q-2}u + N_s^pu \geq 0 & \text{ in } \mathbb{R}^n \setminus\overline{\Omega}.
        \end{cases}
    \end{equation}
    Then, $u \geq 0$ almost everywhere in $\mathbb{R}^n$.
\end{theorem}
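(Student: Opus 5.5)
We test the weak supersolution inequality with the negative part $u^- := \max\{-u,0\}$. Interpreted weakly in the sense of Definition~\ref{DEBOLE} with $f=0$ and "$\geq$", the hypothesis \eqref{MAX} says
\begin{equation*}
\frac{c_{n,s,p}}{2} \int_{\mathbb{R}^{2n}\setminus (\mathbb{R}^n\setminus \Omega)^2} \frac{|u(x)-u(y)|^{p-2}(u(x)-u(y))(\varphi(x)-\varphi(y))}{|x-y|^{n+sp}} \ dxdy + \alpha \int_{\mathbb{R}^n \setminus \Omega} |u|^{q-2}u\varphi \ dx \geq 0
\end{equation*}
for all nonnegative $\varphi \in \mathcal{W}^{s,p}$.

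First we check that $u^-$ is admissible, that is, $u^-\in\mathcal{W}^{s,p}$ and $u^-\ge 0$. Indeed $|u^-(x)-u^-(y)|\le|u(x)-u(y)|$ and $|u^-|\le|u|$, so $\|u^-\|_{\mathcal{W}^{s,p}}\le\|u\|_{\mathcal{W}^{s,p}}$. Choosing $\varphi=u^-$ gives
\begin{equation*}
\frac{c_{n,s,p}}{2}\, I + \alpha \int_{\mathbb{R}^n\setminus\Omega}|u|^{q-2}u\,u^-\,dx \ge 0,
\end{equation*}
where $I$ is the double integral with $\varphi=u^-$. The exterior term equals $-\alpha\int_{\mathbb{R}^n\setminus\Omega}(u^-)^q\,dx\le 0$.

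The key step is the pointwise inequality
\begin{equation*}
|a-b|^{p-2}(a-b)(a^--b^-)\le -|a^--b^-|^p \qquad\text{for all } a,b\in\mathbb{R}.
\end{equation*}
We verify it by cases. If $a,b\ge0$, both sides vanish. If $a,b<0$, then $a^--b^-=-(a-b)$, and the left side is exactly $-|a-b|^p$, which equals the right side. If $a\ge0>b$, then $a^--b^-=b<0$ while $a-b>0$, so the left side is $-(a-b)^{p-1}|b|$. Since $a-b\ge|b|$, this is at most $-|b|^p=-|a^--b^-|^p$. The case $b\ge0>a$ follows by symmetry.

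Combining the pieces yields
\begin{equation*}
0\le -\frac{c_{n,s,p}}{2}[u^-]^p_{W^{s,p}(\Omega)} - \alpha\int_{\mathbb{R}^n\setminus\Omega}(u^-)^q\,dx .
\end{equation*}
Both terms must therefore vanish. Hence $u^-=0$ a.e.\ in $\mathbb{R}^n\setminus\Omega$ (here we use $\alpha>0$), and $u^-(x)=u^-(y)$ for a.e.\ $(x,y)$ in $\mathbb{R}^{2n}\setminus(\mathbb{R}^n\setminus\Omega)^2$, which contains $\Omega\times(\mathbb{R}^n\setminus\Omega)$. Since $|\mathbb{R}^n\setminus\Omega|>0$, for a.e.\ $x\in\Omega$ we can pick $y$ outside $\Omega$ with $u^-(y)=0$, giving $u^-(x)=0$. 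Thus $u\ge0$ a.e.\ in $\mathbb{R}^n$.

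The only delicate points are the admissibility of $u^-$ and the mixed-sign case of the pointwise inequality; both are handled above. Beyond these, the argument is routine.
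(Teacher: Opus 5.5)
Your proof is correct and follows the same route as the paper: you test with $u^-$, use the pointwise inequality $|a-b|^{p-2}(a-b)(a^--b^-)\le -|a^--b^-|^p$ together with $|u|^{q-2}u\,u^-=-(u^-)^q$, and conclude that both the seminorm of $u^-$ and its exterior $L^q$ norm vanish. The differences are cosmetic: you read the weak supersolution condition directly as the combined inequality, skipping the paper's intermediate step through $N_s^p u$, and you write out the case analysis and the final Fubini argument on $\Omega\times(\mathbb{R}^n\setminus\Omega)$, which the paper leaves to inspection.
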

\begin{proof}
Using $0 \leq u^-(x):= \max\{-u(x),0\}\in \mathcal{W}^{s,p}$ as a test function in the weak formulation of~\eqref{MAX}, we get
\begin{equation}\label{PM}
\begin{split}
    \frac{c_{n,s,p}}{2} &\int_{\mathbb{R}^{2n}\setminus (\mathbb{R}^n\setminus \Omega)^2} \frac{|u(x) - u(y)|^{p-2}(u(x) - u(y))(u^-(x)-u^-(y))}{|x-y|^{n+sp}} \ dxdy\\
    &\qquad\qquad\qquad- \int_{\mathbb{R}^n \setminus \Omega} N_s^pu(x)u^-(x)\ dx\geq0.
\end{split}
\end{equation}
Moreover, since $-N_s^pu(x)\leq \alpha |u(x)|^{q-2}u(x)$ in
$\mathbb{R}^n\setminus\Omega$ and $u^-(x)\geq0$, we have
\begin{equation*}
-\int_{\mathbb{R}^n\setminus\Omega}
N_s^pu(x)u^-(x)\,dx
\leq
\alpha
\int_{\mathbb{R}^n\setminus\Omega}
|u(x)|^{q-2}u(x)u^-(x)\,dx.
\end{equation*}
Therefore, from \eqref{PM}, it follows that
\begin{equation}\label{2.5BIS}
\begin{split}
  &  \frac{c_{n,s,p}}{2}
    \int_{\mathbb{R}^{2n}\setminus (\mathcal{C}\Omega)^2}
    \frac{|u(x) - u(y)|^{p-2}(u(x)-u(y))(u^-(x)-u^-(y))}
    {|x-y|^{n+sp}}
    \,dxdy\\
    &\qquad\qquad\qquad+
    \alpha
    \int_{\mathbb{R}^n\setminus\Omega}
    |u(x)|^{q-2}u(x)u^-(x)\,dx
    \geq0.
\end{split}
\end{equation}

Now, by inspection, one sees that,
for a.e. $(x,y) \in \mathbb{R}^{2n}\setminus (\mathbb{R}^n\setminus \Omega)^2$,
\begin{equation*}
    |u(x) - u(y)|^{p-2}(u(x)-u(y))(u^-(x) -u^-(y)) \leq -|u^-(x) -u^-(y)|^p.
\end{equation*}
Hence,
\begin{equation}\label{2.5TER}
\begin{split}
    &\frac{c_{n,s,p}}{2} \int_{\mathbb{R}^{2n}\setminus (\mathbb{R}^n\setminus \Omega)^2} \frac{|u(x) - u(y)|^{p-2}(u(x) - u(y))(u^-(x)-u^-(y))}{|x-y|^{n+sp}} \ dxdy\\
    &\qquad\qquad\leq -\frac{c_{n,s,p}}{2} \int_{\mathbb{R}^{2n}\setminus (\mathbb{R}^n\setminus \Omega)^2} \frac{|u^-(x) - u^-(y)|^{p}}{|x-y|^{n+sp}} \ dxdy.
\end{split}
\end{equation}

Moreover, it is easily verified that
\begin{equation*}
    |u(x)|^{q-2}u(x)u^-(x) = -|u^-(x)|^q, \quad \text{for a.e. } x \in \mathbb{R}^n \setminus \Omega.
\end{equation*}
Consequently,
\begin{equation}\label{2.5QUA}
    \alpha \int_{\mathbb{R}^n \setminus \Omega} |u(x)|^{q-2}u(x)u^-(x)\ dx = -\alpha \int_{\mathbb{R}^n \setminus \Omega} |u^-(x)|^q\ dx.
\end{equation}

Combining~\eqref{2.5BIS}, \eqref{2.5TER} and~\eqref{2.5QUA}, we obtain
\begin{equation*}
   0 \leq -\frac{c_{n,s,p}}{2} \int_{\mathbb{R}^{2n}\setminus (\mathbb{R}^n\setminus \Omega)^2} \frac{|u^-(x)-u^-(y)|^p}{|x-y|^{n+sp}} \ dxdy -\alpha \int_{\mathbb{R}^n \setminus \Omega} |u^-(x)|^q\ dx \leq 0.
\end{equation*}
This entails that
\begin{equation*}
    \int_{\mathbb{R}^{2n}\setminus (\mathbb{R}^n\setminus \Omega)^2} \frac{|u^-(x)-u^-(y)|^p}{|x-y|^{n+sp}} \ dxdy =0 \quad \text{and} \quad \int_{\mathbb{R}^n \setminus \Omega} |u^-(x)|^q\ dx=0.
\end{equation*}
The first identity implies that $u^-$ is almost everywhere equal to a
constant in $\mathbb{R}^n$. The second one yields
$u^-=0$ almost everywhere in $\mathbb{R}^n\setminus\Omega$; hence such a
constant must be zero. Consequently, $u^- =0$ a.e. in $\mathbb{R}^n$, that is $u \geq 0$ a.e. in $\mathbb{R}^n$.
\end{proof}
\begin{theorem}\label{THM:UNI}
    Let $u_1$, $u_2 \in \mathcal{W}^{s,p}$ weakly satisfy
    \begin{equation}\label{PC}
        \begin{cases}
            (-\Delta)_p^su_1 \leq (-\Delta)_p^su_2 & \text{ in } \Omega,\\
            \alpha |u_1|^{q-2}u_1 + N_s^p u_1 \leq \alpha |u_2|^{q-2}u_2 + N_s^p u_2 & \text{ in } \mathbb{R}^n \setminus\overline{\Omega}.
        \end{cases}
    \end{equation}
    Then, $u_1 \leq u_2$ almost everywhere in $\mathbb{R}^n$.
\end{theorem}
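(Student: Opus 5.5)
We argue as in Theorem~\ref{TPM}, testing the difference of the two weak formulations with the positive part $w:=(u_1-u_2)^+\in\mathcal{W}^{s,p}$. This function lies in $\mathcal{W}^{s,p}$ because $|w(x)-w(y)|\le |(u_1-u_2)(x)-(u_1-u_2)(y)|$ and $|w|\le|u_1|+|u_2|$. Writing the weak form of~\eqref{PC} and using, as in the proof of Theorem~\ref{TPM}, that $-N^p_s$ on the exterior produces the boundary contributions, we get
\begin{equation*}
\frac{c_{n,s,p}}{2}\int_{\mathbb{R}^{2n}\setminus (\mathbb{R}^n\setminus \Omega)^2}\frac{\big(\Phi(u_1(x)-u_1(y))-\Phi(u_2(x)-u_2(y))\big)(w(x)-w(y))}{|x-y|^{n+sp}}\,dxdy+\alpha\int_{\mathbb{R}^n\setminus\Omega}\big(\Phi_q(u_1)-\Phi_q(u_2)\big)w\,dx\le 0,
\end{equation*}
where $\Phi(t):=|t|^{p-2}t$ and $\Phi_q(t):=|t|^{q-2}t$.

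The exterior term is handled directly. Since $\Phi_q$ is strictly increasing, $(\Phi_q(u_1)-\Phi_q(u_2))w\ge0$ pointwise, with equality only where $w=0$.

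The main step is the pointwise sign of the double integrand. Set $a:=u_1(x)-u_1(y)$, $b:=u_2(x)-u_2(y)$ and $\delta:=w(x)-w(y)$, and check the four cases according to whether $u_1>u_2$ at $x$ and/or at $y$.
\begin{itemize}
\item If $u_1\le u_2$ at both points, then $\delta=0$.
\item If $u_1>u_2$ at both points, then $\delta=a-b$. Monotonicity of $\Phi$ gives $(\Phi(a)-\Phi(b))(a-b)\ge0$, with equality iff $a=b$.
\item If $u_1>u_2$ at $x$ only, then $\delta=(u_1-u_2)(x)>0$ and $a-b=(u_1-u_2)(x)-(u_1-u_2)(y)\ge\delta>0$. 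Hence $a>b$, so $\Phi(a)>\Phi(b)$ and the product is positive.
\item The case of $y$ only is symmetric.
\end{itemize}
So the integrand is nonnegative. It vanishes only where either $\delta=0$, or both points lie in $\{u_1>u_2\}$ with $a=b$, that is, with $\delta=0$ again. We therefore never need a quantitative inequality like $\ge c|\delta|^p$, which would require splitting into $p\ge2$ and $p<2$.

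Both terms are nonnegative and their sum is $\le0$, so each vanishes. From the double integral, $w(x)=w(y)$ for a.e.\ pair in $\mathbb{R}^{2n}\setminus(\mathbb{R}^n\setminus\Omega)^2$. This set contains $\Omega\times\mathbb{R}^n$, so $w$ equals a.e.\ a constant $k\ge0$ on all of $\mathbb{R}^n$. From the exterior integral, $w=0$ a.e.\ in $\mathbb{R}^n\setminus\Omega$, which has positive measure, so $k=0$. Hence $u_1\le u_2$ a.e.

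The main obstacle is the sign analysis of the double integrand, in particular the mixed case. The fallback if it became delicate is the standard inequality $(\Phi(a)-\Phi(b))\big((a-b)^+\text{-type increments}\big)\ge0$ obtained from the monotonicity of $t\mapsto\Phi(t)$ along the segment joining $b$ to $a$. A secondary point is to justify that $w$ is an admissible test function and that the sub/supersolution inequalities are tested against nonnegative functions, exactly as in Theorem~\ref{TPM}.
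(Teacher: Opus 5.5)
Your proof is correct and follows the paper's argument: the paper also tests with $(u_2-u_1)^-=(u_1-u_2)^+$, splits the double integral according to whether $x$ and $y$ lie in $\{u_2<u_1\}$, and uses the monotonicity of $t\mapsto|t|^{k-2}t$ to force every term to vanish. Your four-case sign analysis and your closing step spell out what the paper leaves implicit when it says all terms are zero. In that closing step, the vanishing double integral makes $w$ a.e.\ constant, and the vanishing exterior term forces that constant to be $0$.
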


\begin{proof} Let~$w:=u_2-u_1$.
Using $0\leq w^-= (u_2-u_1)^-$ as a test function in the weak formulation of~\eqref{PC}, we obtain
\begin{equation*}
    \begin{split}
    \frac{c_{n,s,p}}{2} &\int_{\mathbb{R}^{2n}\setminus (\mathbb{R}^n\setminus \Omega)^2} \frac{[A(u_2(x),u_2(y))-A(u_1(x),u_1(y)))](w^-(x)-w^-(y))}{|x-y|^{n+sp}} \ dxdy\\
    &\qquad\qquad+\alpha \int_{\mathbb{R}^n \setminus \Omega}(|u_2|^{q-2}u_2-|u_1|^{q-2}u_1) w^-\ dx\geq0,
\end{split}
\end{equation*}
where
\begin{equation*}
    A(\lambda,\tau):= |\lambda-\tau|^{p-2}(\lambda-\tau), \quad {\mbox{ for all }}\lambda,\tau \in \mathbb{R}.
\end{equation*}

Consequently,
\begin{equation*}
    \begin{split}
    \frac{c_{n,s,p}}{2} &
    \int_{ {\mathbb{R}^{2n}\setminus (\mathbb{R}^n\setminus \Omega)^2} \atop{\{u_2 < u_1\}\times \{u_2 < u_1\}}}
\frac{[A(u_2(x),u_2(y))-A(u_1(x),u_1(y))](u_1(x)-u_1(y)-u_2(x)+u_2(y))}{|x-y|^{n+sp}} \ dxdy\\
    &-\frac{c_{n,s,p}}{2}
    \int_{ {\mathbb{R}^{2n}\setminus (\mathbb{R}^n\setminus \Omega)^2} \atop{\{u_2 \geq u_1\}\times \{u_2 < u_1\}}}
    \frac{[A(u_2(x),u_2(y))-A(u_1(x),u_1(y))]w^-(y)}{|x-y|^{n+sp}} \ dxdy\\
    &+\frac{c_{n,s,p}}{2}
    \int_{ {\mathbb{R}^{2n}\setminus (\mathbb{R}^n\setminus \Omega)^2} \atop{\{u_2 < u_1\}\times \{u_2 \geq u_1\}}}
    \frac{[A(u_2(x),u_2(y))-A(u_1(x),u_1(y))]w^-(x)}{|x-y|^{n+sp}} \ dxdy\\
    &\qquad\qquad+\alpha \int_{\mathbb{R}^n \setminus \Omega}(|u_2|^{q-2}u_2-|u_1|^{q-2}u_1) w^-\ dx\geq0.
\end{split}
\end{equation*}
By the properties of the function $w^-$ and by the monotonicity of the map $t \mapsto |t|^{k-2}t$, with~$k>1$, we deduce that all the terms in the previous inequality have to be equal to $0$. This implies that $w^- = 0$ a.e. in $\{u_2 < u_1\}$.
\end{proof}

We then obtain a uniqueness result:

\begin{lemma}\label{LEM:UNI}
Problem~\eqref{pb:RF} admits at most one weak solution.
\end{lemma}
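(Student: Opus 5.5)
The plan is to deduce Lemma~\ref{LEM:UNI} directly from the comparison principle in Theorem~\ref{THM:UNI}, applied twice with the roles of the two solutions exchanged.

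Suppose $u_1,u_2\in\mathcal{W}^{s,p}$ are both weak solutions of~\eqref{pb:RF} in the sense of Definition~\ref{DEBOLE}, with the same $f\in(\mathcal{W}^{s,p})^*$ and the same $\alpha>0$. Subtracting the two weak formulations gives, for every $\varphi\in\mathcal{W}^{s,p}$,
\begin{equation*}
\frac{c_{n,s,p}}{2}\int_{\mathbb{R}^{2n}\setminus(\mathbb{R}^n\setminus\Omega)^2}\frac{[A(u_2(x),u_2(y))-A(u_1(x),u_1(y))](\varphi(x)-\varphi(y))}{|x-y|^{n+sp}}\,dxdy+\alpha\int_{\mathbb{R}^n\setminus\Omega}(|u_2|^{q-2}u_2-|u_1|^{q-2}u_1)\varphi\,dx=0,
\end{equation*}
with the right-hand sides $\int_\Omega f\varphi$ cancelling. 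In particular, this quantity is $\geq0$ for every nonnegative test function. That is exactly the weak form of~\eqref{PC} used in the proof of Theorem~\ref{THM:UNI}, where the only test function needed is $w^-=(u_2-u_1)^-\ge0$, which lies in $\mathcal{W}^{s,p}$ because truncation does not increase the Gagliardo seminorm or the $L^q$ norm. Hence Theorem~\ref{THM:UNI} yields $u_1\le u_2$ a.e. in $\mathbb{R}^n$.

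Since the identity above is symmetric, it also holds with $u_1$ and $u_2$ interchanged, and the same argument gives $u_2\le u_1$ a.e. Therefore $u_1=u_2$ a.e. in $\mathbb{R}^n$, which proves uniqueness.

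The only point requiring care is to check that "weakly satisfying~\eqref{PC}" is interpreted consistently with Definition~\ref{DEBOLE}. Concretely, the inequality in~\eqref{PC} means that the difference of the two bilinear-type forms is nonnegative on nonnegative test functions. Equality of the two weak formulations gives this in both directions, so there is no genuine obstacle.
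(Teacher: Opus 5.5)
Your proposal is correct and follows the paper's route: the paper proves Lemma~\ref{LEM:UNI} in one line by invoking the comparison principle of Theorem~\ref{THM:UNI}. Your argument makes that line explicit: subtracting the two weak formulations gives the weak form of \eqref{PC} in both directions, $w^-\in\mathcal{W}^{s,p}$, and applying the theorem twice with the roles of $u_1$ and $u_2$ swapped yields $u_1=u_2$ a.e.
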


\begin{proof}
The uniqueness statement follows from Theorem~\ref{THM:UNI}.
\end{proof}

We now turn our attention to the functional $E_\alpha$
introduced in~\eqref{EALFADE}. In order to prove the existence of minimizers of the functional $J_\alpha$, the following result is needed.
\begin{lemma}\label{coercivity}
    For all $u \in \mathcal{W}^{s,p}$, there exists a positive constant $C(n,\Omega,s,p,q)$ such that
    \begin{equation}\label{INEQ}
        \|u\|_{L^p(\Omega)} \leq C \left[  \left(\int_{\mathbb{R}^{2n}\setminus (\mathbb{R}^n\setminus \Omega)^2} \frac{|u(x)-u(y)|^p}{|x-y|^{n+sp}} \ dxdy\right)^{\frac{1}{p}} + \left(\int_{\mathbb{R}^n \setminus \Omega} |u|^q \ dx \right)^{\frac{1}{q}} \right].
    \end{equation}
    
    Moreover, for any $\alpha>0$, the functional $J_\alpha$ is coercive on $\mathcal{W}^{s,p}$.
\end{lemma}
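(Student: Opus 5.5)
Two routes are available for \eqref{INEQ}: a compactness argument by contradiction, or a direct estimate that averages the cross interaction between $\Omega$ and a fixed bounded exterior set. I will use the direct one, since it gives an explicit constant, and then deduce coercivity from \eqref{INEQ} by Young's inequality.

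\emph{Set-up.} Fix a bounded measurable set $K\subset\mathbb{R}^n\setminus\Omega$ with $0<|K|<\infty$, for instance $K=B_R\setminus\Omega$ with $R$ large so that $\Omega\subset B_{R/2}$. Then there is $d=d(\Omega,K)$ with $|x-y|\le d$ for all $x\in\Omega$, $y\in K$.

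\emph{Splitting $u(x)$.} For $x\in\Omega$ and $y\in K$ write $u(x)=(u(x)-u(y))+u(y)$. Raise to the power $p$, use $|a+b|^p\le 2^{p-1}(|a|^p+|b|^p)$, and average in $y$ over $K$:
\[
|u(x)|^p\le \frac{2^{p-1}}{|K|}\int_K |u(x)-u(y)|^p\,dy+\frac{2^{p-1}}{|K|}\int_K|u(y)|^p\,dy .
\]
\emph{First term.} Since $|x-y|\le d$, we have $|u(x)-u(y)|^p\le d^{n+sp}\,|u(x)-u(y)|^p|x-y|^{-(n+sp)}$. Integrating over $x\in\Omega$, and noting that $\Omega\times K\subset\mathbb{R}^{2n}\setminus(\mathbb{R}^n\setminus\Omega)^2$, this contributes at most $C[u]_{W^{s,p}(\Omega)}^p$.

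\emph{Second term.} Integrating over $x\in\Omega$ gives $\frac{2^{p-1}|\Omega|}{|K|}\|u\|_{L^p(K)}^p$, which must be controlled by $\|u\|_{L^q(\mathbb{R}^n\setminus\Omega)}$. If $q\ge p$, Hölder on the bounded set $K$ gives $\|u\|_{L^p(K)}\le |K|^{\frac1p-\frac1q}\|u\|_{L^q(K)}$, and after taking $p$-th roots we obtain \eqref{INEQ}.

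\emph{Main obstacle: the case $q<p$.} Here Hölder goes the wrong way, so I would not pass through $L^p(K)$. Instead I would bound $u(x)$ by its $L^q$-mean on $K$: use $|u(x)|\le |u(x)-u(y)|+|u(y)|$ and raise to the power $q$ rather than $p$. This gives
\[
|u(x)|\le \Big(\frac{1}{|K|}\int_K|u(x)-u(y)|^q dy\Big)^{1/q}+\Big(\frac{1}{|K|}\int_K|u|^q\Big)^{1/q}.
\]
By Jensen, since $q<p$, the first quantity is at most $\big(\frac1{|K|}\int_K|u(x)-u(y)|^p dy\big)^{1/p}$. Taking $L^p(\Omega)$ norms in $x$ then bounds the first part by $C[u]_{W^{s,p}(\Omega)}$ exactly as before, and the second part by $C\|u\|_{L^q(K)}$. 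This single argument works for every $q$ if, for $q\ge p$, one instead uses Minkowski together with Hölder on the averages. So \eqref{INEQ} holds with $C=C(n,\Omega,s,p,q)$, where the dependence is through $|K|$, $d$ and $|\Omega|$.

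\emph{Coercivity.} By \eqref{INEQ}, $\|u\|_{\mathcal{W}^{s,p}}\le C([u]+\|u\|_{L^q(\mathbb{R}^n\setminus\Omega)})$, with $[u]=[u]_{W^{s,p}(\Omega)}$. Since $f\in(\mathcal{W}^{s,p})^*$,
\[
J_\alpha(u)\ge \frac{c_{n,s,p}}{2p}[u]^p+\frac{\alpha}{q}\|u\|_{L^q(\mathbb{R}^n\setminus\Omega)}^q-C\|f\|_*\big([u]+\|u\|_{L^q(\mathbb{R}^n\setminus\Omega)}\big).
\]
Because $p,q>1$, Young's inequality absorbs each linear term into the corresponding power up to constants, for example $C\|f\|_*[u]\le \frac{c_{n,s,p}}{4p}[u]^p+C'$. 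Hence $J_\alpha(u)\ge c\,([u]^p+\|u\|_{L^q(\mathbb{R}^n\setminus\Omega)}^q)-C''$.

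When $\|u\|_{\mathcal{W}^{s,p}}\to\infty$, \eqref{INEQ} forces $[u]+\|u\|_{L^q(\mathbb{R}^n\setminus\Omega)}\to\infty$, so at least one of $[u]^p$ and $\|u\|_{L^q(\mathbb{R}^n\setminus\Omega)}^q$ tends to $\infty$. Therefore $J_\alpha(u)\to+\infty$, which is the claimed coercivity.
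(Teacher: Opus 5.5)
Your proof is correct, but it takes a different route from the paper.

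The paper obtains \eqref{INEQ} by the usual compactness argument by contradiction. It normalizes $\|u_k\|_{L^p(\Omega)}=1$ with $[u_k]_{W^{s,p}(\Omega)}+\|u_k\|_{L^q(\mathbb{R}^n\setminus\Omega)}\to0$ and extracts a limit $v$ via the compact embedding into $L^p(\Omega)$. Fatou's lemma then shows that $v$ has zero seminorm, hence is constant, and the vanishing exterior norm forces $v=0$, contradicting the normalization. Coercivity is also proved by contradiction, by showing that a sequence with bounded energy must be bounded in $\mathcal{W}^{s,p}$.

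You instead use directly the interaction between $\Omega$ and a bounded exterior set $K$, which is built into the seminorm. You average $u(x)=(u(x)-u(y))+u(y)$ over $y\in K$, bound the kernel from below by $d^{-(n+sp)}$ on $\Omega\times K$, and compare averages with exponent $\min\{p,q\}$ by H\"older/Jensen. This has several advantages:
\begin{itemize}
\item it gives an explicit constant depending only on $d$, $|K|$ and $|\Omega|$;
\item it avoids the compact embedding altogether, so the Lipschitz regularity of $\partial\Omega$ plays no role;
\item it avoids the subsequence and a.e.-identification bookkeeping of the contradiction argument.
\end{itemize}
Likewise, your Young's-inequality bound $J_\alpha(u)\ge c\big([u]^p+\|u\|^q_{L^q(\mathbb{R}^n\setminus\Omega)}\big)-C$ gives coercivity in quantitative form.

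The paper's argument is softer and non-constructive, but it is the standard template for Poincar\'e-type inequalities. It only uses that the extra term is lower semicontinuous and does not vanish on nonzero constants. It would therefore transfer unchanged to other penalizations for which no pointwise averaging against the kernel is available.
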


\begin{proof} To prove~\eqref{INEQ},
we argue by contradiction, supposing that \eqref{INEQ} does not hold true. Therefore, without loss of generality, we can suppose that there exists a sequence $\{u_k\}_k \subset \mathcal{W}^{s,p}$ such that 
\begin{equation}\label{cnsar3t7rfguawegfuiSDFGdiweugfuke76}
\|u_k\|_{L^p(\Omega)}=1\end{equation} and
    \begin{equation}\label{conv_ass}
        \left(\int_{\mathbb{R}^{2n}\setminus (\mathbb{R}^n\setminus \Omega)^2} \frac{|u_k(x)-u_k(y)|^p}{|x-y|^{n+sp}} \ dxdy\right)^{\frac{1}{p}} + \left(\int_{\mathbb{R}^n \setminus \Omega} |u_k|^q \ dx \right)^{\frac{1}{q}} \rightarrow 0, \ \text{ as } k \rightarrow\infty.
    \end{equation}
    
    We notice that the sequence $\{u_k\}$ is uniformly bounded in $\mathcal{W}^{s,p}$, thus there exists a function~$v \in \mathcal{W}^{s,p}$ such that, up to a subsequence, as~$k\to+\infty$,
\begin{equation*}
  {\mbox{$u_k \rightharpoonup v$ in~$\mathcal{W}^{s,p}$ and~$u_k \rightarrow v$ in~$L^p(\Omega)$ and almost everywhere in~$\Omega$,}}
\end{equation*}
see e.g.~\cite[Theorem~7.2]{GUIDA} for the compact embedding.

From~\eqref{conv_ass}, we also infer that
\begin{equation}\label{vcmsery43t673rqwvdfhcwkPIUY}{\mbox{$u_k\rightarrow0$ almost everywhere in~$\mathbb{R}^n\setminus\Omega$, as~$k\to+\infty$.}}
\end{equation}

As a consequence, by \eqref{conv_ass} and Fatou's lemma,
\begin{equation*}
    \int_{\mathbb{R}^{2n}\setminus (\mathbb{R}^n\setminus \Omega)^2} \frac{|v(x)-v(y)|^p}{|x-y|^{n+sp}} \ dxdy = 0,
\end{equation*}
and therefore $v$ is constant a.e. in $\mathbb{R}^n$. 
This and~\eqref{vcmsery43t673rqwvdfhcwkPIUY} entail that~$v=0$
a.e. in $\mathbb{R}^n$. 

However, we know that $u_k \rightarrow v$ in $L^p(\Omega)$ and thus we get a contradiction with~\eqref{cnsar3t7rfguawegfuiSDFGdiweugfuke76}. The inequality in~\eqref{INEQ} is thereby established.

Now, to prove the coercivity of $J_\alpha$, assume by contradiction that there exists a sequence~$\{u_k\}_k \subset \mathcal{W}^{s,p}$ such that
\begin{equation*}
    \|u_k\|_{\mathcal{W}^{s,p}} \rightarrow + \infty \quad \text{ and } \quad J_\alpha(u_k) \leq C.
\end{equation*}
By the definition of $J_\alpha$, see \eqref{FUNCTIONAL}, it follows that
\begin{equation}\label{stima_prima}
\begin{split}
    \frac{c_{n,s,p}}{2p} \int_{\mathbb{R}^{2n}\setminus (\mathbb{R}^n\setminus \Omega)^2} \frac{|u_k(x)-u_k(y)|^p}{|x-y|^{n+sp}} \ dxdy + \frac{\alpha}{q} \int_{\mathbb{R}^n \setminus \Omega} |u_k|^q \ dx &\leq C + \int_\Omega f u_k \ dx\\
    &\leq C + \|f\|_{(\mathcal{W}^{s,p})^*}\|u_k\|_{\mathcal{W}^{s,p}},
\end{split}
\end{equation} up to renaming~$C>0$.

Moreover, see \eqref{NORMA}, we deduce from \eqref{INEQ} that 
\begin{equation*}
    \|u_k\|_{\mathcal{W}^{s,p}} \leq C \left[  \left(\int_{\mathbb{R}^{2n}\setminus (\mathbb{R}^n\setminus \Omega)^2} \frac{|u_k(x)-u_k(y)|^p}{|x-y|^{n+sp}} \ dxdy\right)^{\frac{1}{p}} + \left(\int_{\mathbb{R}^n \setminus \Omega} |u_k|^q \ dx \right)^{\frac{1}{q}} \right].
\end{equation*}
Therefore, using this information into~\eqref{stima_prima},
we find that
\begin{equation*}
\begin{split}
    \frac{c_{n,s,p}}{2p} &\int_{\mathbb{R}^{2n}\setminus (\mathbb{R}^n\setminus \Omega)^2} \frac{|u_k(x)-u_k(y)|^p}{|x-y|^{n+sp}} \ dxdy + \frac{\alpha}{q} \int_{\mathbb{R}^n \setminus \Omega} |u_k|^q \ dx\\
    &\leq C + C\|f\|_{(\mathcal{W}^{s,p})^*} \left[  \left(\int_{\mathbb{R}^{2n}\setminus (\mathbb{R}^n\setminus \Omega)^2} \frac{|u_k(x)-u_k(y)|^p}{|x-y|^{n+sp}} \ dxdy\right)^{\frac{1}{p}} + \left(\int_{\mathbb{R}^n \setminus \Omega} |u_k|^q \ dx \right)^{\frac{1}{q}} \right].
\end{split}
\end{equation*}
Hence, since~$p$, $q>1$,
we conclude that $\{u_k\}$ is uniformly bounded in $\mathcal{W}^{s,p}$, in contradiction with the assumption $\|u_k\|_{\mathcal{W}^{s,p}} \rightarrow + \infty$.
\end{proof}

The following result provides the existence and uniqueness of a minimizer $u_\alpha \in \mathcal{W}^{s,p}$ of the functional $J_\alpha$, for any given $\alpha > 0$.

\begin{lemma}\label{esun}
    For any $\alpha > 0$, there exists a unique function $u_\alpha \in \mathcal{W}^{s,p}$ achieving $E_\alpha$. 
    
    Moreover, $u_\alpha$ is a weak solution of \eqref{pb:RF}, in the sense of Definition~\ref{DEBOLE}.
\end{lemma}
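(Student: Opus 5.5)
The argument follows the direct method of the calculus of variations, then strict convexity, then a first-variation computation.

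\emph{Existence.} Take a minimizing sequence $\{u_k\}_k\subset\mathcal{W}^{s,p}$ for $J_\alpha$. First, $E_\alpha>-\infty$: combining the estimate $\int_\Omega fu\le \|f\|_{(\mathcal{W}^{s,p})^*}\|u\|_{\mathcal{W}^{s,p}}$ with \eqref{INEQ}, as in the proof of Lemma~\ref{coercivity}, gives $J_\alpha(u)\ge -C$ because $p,q>1$. By the coercivity in Lemma~\ref{coercivity}, $\{u_k\}$ is bounded in $\mathcal{W}^{s,p}$. This space is reflexive, since it embeds isometrically, via $u\mapsto\big(u|_\Omega,\ \frac{u(x)-u(y)}{|x-y|^{(n+sp)/p}},\ u|_{\mathbb{R}^n\setminus\Omega}\big)$, as a closed subspace of a product of $L^p$ and $L^q$ spaces with $1<p,q<\infty$. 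Hence, up to a subsequence, $u_k\rightharpoonup u_\alpha$ in $\mathcal{W}^{s,p}$. The functional $u\mapsto\int_\Omega fu$ is a continuous linear functional, so it passes to the limit along weak convergence. The other two terms of $J_\alpha$ are convex and strongly continuous, hence weakly lower semicontinuous. One can also argue concretely: compactness in $L^p(\Omega)$ as in Lemma~\ref{coercivity} gives a.e. convergence in $\Omega$, and a diagonal argument on $L^q$ weak limits gives a.e. convergence outside $\Omega$ after passing to convex combinations via Mazur; Fatou's lemma then applies. Either way $J_\alpha(u_\alpha)\le\liminf J_\alpha(u_k)=E_\alpha$, so $u_\alpha$ attains $E_\alpha$.

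\emph{Uniqueness.} I will show $J_\alpha$ is strictly convex on $\mathcal{W}^{s,p}$. The Gagliardo term is convex and the linear term is affine. The map $t\mapsto|t|^q$ is strictly convex for $q>1$, and so is $t\mapsto|t|^p$. Suppose $u\ne v$ are both minimizers, and consider $\frac{u+v}{2}$.
\begin{itemize}
\item If $u\ne v$ on a set of positive measure in $\mathbb{R}^n\setminus\Omega$, the exterior term is strictly smaller at $\frac{u+v}{2}$ than the average of its values at $u$ and $v$.
\item Otherwise $u=v$ a.e.\ outside $\Omega$. Strict convexity of $|\cdot|^p$ forces $u(x)-u(y)=v(x)-v(y)$ a.e.\ in $\mathbb{R}^{2n}\setminus(\mathbb{R}^n\setminus\Omega)^2$, which would otherwise give a strict decrease. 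Hence $u-v$ is constant a.e., and that constant is $0$ because $u=v$ outside $\Omega$, which has positive measure.
\end{itemize}
In the first case, and in the second unless $u=v$, we get $J_\alpha\big(\frac{u+v}2\big)<E_\alpha$, a contradiction. So the minimizer is unique. Alternatively, once the Euler--Lagrange equation below is established, uniqueness also follows from Lemma~\ref{LEM:UNI}.

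\emph{Euler--Lagrange equation.} For $\varphi\in\mathcal{W}^{s,p}$, the function $t\mapsto J_\alpha(u_\alpha+t\varphi)$ has a minimum at $t=0$, so its derivative there vanishes. The needed differentiation under the integral sign is where the real technical work lies, and I expect it to be the main obstacle. I would use the elementary bound
\[
\Big|\tfrac{|a+tb|^r-|a|^r}{t}\Big|\le r\,(|a|+|b|)^{r-1}|b|\qquad\text{for }|t|\le1.
\]
The right-hand side is integrable by H\"older's inequality: take $a=u_\alpha(x)-u_\alpha(y)$, $b=\varphi(x)-\varphi(y)$ with weight $|x-y|^{-n-sp}$ and exponents $\big(\frac{p}{p-1},p\big)$ for the Gagliardo term, and $a=u_\alpha$, $b=\varphi$ with exponents $\big(\frac{q}{q-1},q\big)$ for the exterior term. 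Dominated convergence then applies. The derivative of $\frac{c_{n,s,p}}{2p}\int|u(x)-u(y)|^p|x-y|^{-n-sp}$ produces exactly the $\frac{c_{n,s,p}}{2}$ bilinear term of Definition~\ref{DEBOLE}. The exterior term gives $\alpha\int|u_\alpha|^{q-2}u_\alpha\varphi$, and the linear term gives the pairing $\int_\Omega f\varphi$. Setting the derivative to zero yields exactly the weak formulation in Definition~\ref{DEBOLE}.
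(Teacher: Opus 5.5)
Your proposal is correct and follows the paper's route: existence by the direct method (coercivity from Lemma~\ref{coercivity} plus weak lower semicontinuity), uniqueness by strict convexity of $J_\alpha$, and the Euler--Lagrange equation by the first variation as in Proposition~\ref{minimo_weak}. The paper uses Taylor remainder bounds there where you use a mean-value/dominated-convergence bound, and your case analysis for strict convexity is more careful than the paper's one-line claim, since the exterior $L^q$ term alone is not strictly convex on $\mathcal{W}^{s,p}$ (it ignores the values in $\Omega$).
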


\begin{proof}
We know that, by Lemma~\ref{coercivity}, the functional $J_\alpha$ is coercive. Moreover, we claim that~$J_\alpha$ is weakly lower semicontinuous. Indeed, since the Gagliardo seminorm defines a convex and continuous functional on ${W}^{s,p}(\Omega)$, it follows that it is weakly lower semicontinuous with respect to the weak topology of ${W}^{s,p}(\Omega)$.
In addition, the term involving the $L^q$-norm on~$\mathbb{R}^n \setminus \Omega$ is also convex and continuous, hence it is weakly lower semicontinuous in $L^q(\mathbb{R}^n \setminus \Omega)$. In addition, the term containing the source function $f$ is linear and continuous, therefore it is weakly continuous.
Combining these observations, we conclude that $J_\alpha$ is weakly lower semicontinuous on $\mathcal{W}^{s,p}$.

Let $\{u_k\}_k\subset \mathcal{W}^{s,p}$ be a minimizing sequence for $J_\alpha$. By coercivity, $\{u_k\}_k$ is bounded in $\mathcal{W}^{s,p}$. 
Hence, there exists a function $u_\alpha \in \mathcal{W}^{s,p}$ such that, up to a subsequence, $u_k \rightharpoonup u_\alpha$ weakly in~$\mathcal{W}^{s,p}$. By weak lower semicontinuity, it follows that $u_\alpha$ is a minimizer of $J_\alpha$. Furthermore, the functional $J_\alpha$ is strictly convex, as the sum of two strictly convex terms and a linear one, hence the minimizer $u_\alpha$ is unique.

In addition, by Proposition~\ref{minimo_weak}, it follows that the unique minimizer $u_\alpha \in \mathcal{W}^{s,p}$ of the functional $J_\alpha$ is a weak solution of \eqref{pb:RF}.
\end{proof}

The following result provides, for $q \geq p$, a decay estimate for the unique weak solution~$u_\alpha \in \mathcal{W}^{s,p}$ of problem \eqref{pb:RF} as $|x| \to +\infty$.
\begin{lemma}\label{DECAY:TH}
For any $\alpha >0$, let $u_\alpha \in \mathcal{W}^{s,p}$ be the unique weak solution of \eqref{pb:RF}, with $q\geq p$. Then,
\begin{equation*}
    \limsup_{|x|->+\infty} |x|^\frac{n+sp}{q-1}|u_\alpha(x)| < +\infty.
\end{equation*}
\end{lemma}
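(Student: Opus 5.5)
The exterior condition can be read pointwise. Since $\Omega$ is bounded and $u_\alpha$ is a weak solution, testing the weak formulation of Definition~\ref{DEBOLE} with functions $\varphi$ supported in $\mathbb{R}^n\setminus\overline\Omega$ gives, for a.e.\ $x\in\mathbb{R}^n\setminus\overline\Omega$,
\begin{equation*}
\alpha |u_\alpha(x)|^{q-2}u_\alpha(x) = -N^p_s u_\alpha(x) = -c_{n,s,p}\int_\Omega \frac{|u_\alpha(x)-u_\alpha(y)|^{p-2}(u_\alpha(x)-u_\alpha(y))}{|x-y|^{n+sp}}\,dy .
\end{equation*}
For these test functions only the pairs with one variable in $\Omega$ and the other outside contribute, and the factor $\tfrac12$ cancels with the symmetry. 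So the strategy is to estimate $|u_\alpha(x)|^{q-1}$ by the right-hand side for $|x|$ large.

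Fix $R$ with $\Omega\subset B_R$. For $|x|\ge 2R$ and $y\in\Omega$ we have $|x-y|\ge |x|/2$, so
\begin{equation*}
\alpha |u_\alpha(x)|^{q-1}\le C\,|x|^{-(n+sp)}\int_\Omega |u_\alpha(x)-u_\alpha(y)|^{p-1}\,dy \le C|x|^{-(n+sp)}\Big(|\Omega|\,|u_\alpha(x)|^{p-1}+\|u_\alpha\|_{L^{p-1}(\Omega)}^{p-1}\Big).
\end{equation*}
If $p\le 2$ I would use $|a-b|^{p-1}\le |a|^{p-1}+|b|^{p-1}$; otherwise I would use the standard constant $2^{p-2}$. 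The quantity $\|u_\alpha\|_{L^{p-1}(\Omega)}$ is finite because $u_\alpha\in L^p(\Omega)$ and $\Omega$ is bounded; if $p<2$, so that $p-1<1$, it is still finite by H\"older's inequality.

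The point where $q\ge p$ enters, and where I expect the main obstacle, is absorbing the term $|x|^{-(n+sp)}|u_\alpha(x)|^{p-1}$ into the left-hand side $|u_\alpha(x)|^{q-1}$. Set $t=|u_\alpha(x)|$ and split into two cases. If $t\le 1$, then $t^{p-1}\le 1$ and we get $\alpha t^{q-1}\le C|x|^{-(n+sp)}$ directly. If $t>1$, then $q\ge p$ gives $t^{p-1}\le t^{q-1}$, so
\begin{equation*}
\big(\alpha - C|\Omega||x|^{-(n+sp)}\big)t^{q-1}\le C|x|^{-(n+sp)}.
\end{equation*}
For $|x|$ large enough the coefficient on the left is at least $\alpha/2$. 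In both cases we conclude $t^{q-1}\le C'|x|^{-(n+sp)}$ for a.e.\ $|x|\ge R_0$, that is,
\begin{equation*}
|u_\alpha(x)|\le C''|x|^{-\frac{n+sp}{q-1}},
\end{equation*}
which is the claim, understood as an essential limsup. Equivalently, one could first show $t\le 1$ for large $|x|$ and then conclude.

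One subtlety needs checking: the pointwise identity requires $N^p_s u_\alpha(x)$ to be finite a.e. This holds because, for $x$ at positive distance from $\Omega$, the integrand is bounded by $C(|u_\alpha(x)|^{p-1}+|u_\alpha(y)|^{p-1})$, which is integrable in $y\in\Omega$. For the localization, I would test with $\varphi\in L^\infty$ with compact support in $\mathbb{R}^n\setminus\overline\Omega$; such $\varphi$ lie in $\mathcal{W}^{s,p}$, since the Gagliardo seminorm only sees the interaction of $\varphi$ with $\Omega$, which is at positive distance from its support. Then the fundamental lemma of the calculus of variations gives the identity a.e.
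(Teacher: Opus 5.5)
Your proposal is correct and follows essentially the same route as the paper. The shared steps are the bound $|x-y|\ge |x|/2$ for $y\in\Omega$, the estimate $\alpha|u_\alpha(x)|^{q-1}\le C|x|^{-(n+sp)}\bigl(|\Omega|\,|u_\alpha(x)|^{p-1}+\|u_\alpha\|_{L^{p-1}(\Omega)}^{p-1}\bigr)$, and absorption using $q\ge p$; your split into $t\le 1$ and $t>1$ is equivalent to the paper's use of $t^{p-1}\le 1+t^{q-1}$. Your justification of the pointwise exterior identity via test functions supported in $\mathbb{R}^n\setminus\overline\Omega$, which the paper takes for granted, is a welcome addition. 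Note that it tacitly uses that the pairing $\int_\Omega f\varphi\,dx$ vanishes for such $\varphi$, an interpretation of $f\in(\mathcal{W}^{s,p})^*$ that the paper also adopts implicitly.
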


\begin{proof} Since $\Omega$ is bounded, there exists $R>0$ such that $\Omega \subset B_{R}$. Without loss of generality, assume $|x| > 2R$ and $y \in \Omega$. Then, it follows that
    \begin{equation*}
        |x-y| \geq |x|-|y| \geq |x|-R \geq \frac{|x|}{2}.
    \end{equation*}
    Therefore,
    \begin{equation*}
       \frac1{ |x-y|^{n+sp}} \leq \frac{2^{n+sp}}{|x|^{n+sp}}.
    \end{equation*}
    By \eqref{N_nonlocal}, for any $x \in \mathbb{R}^n \setminus \overline{\Omega}$ such that $|x|>2R$, we obtain
    \begin{equation*}
    \begin{split}
        |N^p_s u_\alpha(x)| &= c_{n,s,p} \left |\int_\Omega \frac{|u_\alpha(x)-u_\alpha(y)|^{p-2}(u_\alpha(x)-u_\alpha(y))}{|x-y|^{n+sp}} \ dy\right|\\
        &\leq \frac{c_{n,s,p}2^{n+sp}\max\{1,2^{p-2}\}}{|x|^{n+sp}}\left( \int_\Omega |u_\alpha(x)|^{p-1}\ dy + \int_\Omega |u_\alpha(y)|^{p-1} \ dy\right)\\
        &\leq \frac{c_{n,s,p}2^{n+sp}\max\{1,2^{p-2}\}}{|x|^{n+sp}}\left(|\Omega||u_\alpha(x)|^{p-1} +\|
        u_\alpha\|^{p-1}_{L^{p-1}(\Omega)} \right).
    \end{split}
    \end{equation*}
    
    Since $u_\alpha$ is a weak solution of \eqref{pb:RF}, we recall that $\alpha |u_\alpha(x)|^{q-2}u_\alpha(x) + N^p_s u_\alpha(x) =0$ for any~$x \in \mathbb{R}^n \setminus \overline{\Omega}$. Therefore,
    \begin{equation*}
        \alpha |u_\alpha(x)|^{q-1} = |N^p_s u_\alpha(x)| \leq \frac{c_{n,s,p}2^{n+sp}\max\{1,2^{p-2}\}}{|x|^{n+sp}}\left(|\Omega||u_\alpha(x)|^{p-1} +
        \|u_\alpha\|^{p-1}_{L^{p-1}(\Omega)} \right).
    \end{equation*}
    In particular,
    \begin{equation*}
    |u_\alpha(x)|^{q-1} - \frac{c_1}{\alpha|x|^{n+sp}}|u_\alpha(x)|^{p-1} \leq \frac{c_2}{\alpha|x|^{n+sp}},
    \end{equation*}
    for some~$c_1$, $c_2>0$, depending on~$n$, $s$, $p$, $\Omega$,
    and~$\|u_\alpha\|_{L^{p-1}(\Omega)}$.
    
    It is straightforward to verify that, for $q \geq p$,
    \begin{equation*}
        |u_\alpha(x)|^{p-1} \leq 1+ |u_\alpha(x)|^{q-1}.
    \end{equation*}
    Therefore, we deduce that
    \begin{equation*}
        \left(1 - \frac{c_1}{\alpha|x|^{n+sp}}\right)|u_\alpha(x)|^{q-1} \leq \frac{c_2}{\alpha|x|^{n+sp}},
    \end{equation*} up to renaming~$c_2$.
    
    Hence, for $|x|$ large enough, it follows that
    \begin{equation*}
        |u_\alpha(x)|\leq \frac{C}{|x|^{\frac{n+sp}{q-1}}},
    \end{equation*}
    where $C=C(n,s,p,q,|\Omega|,\alpha,\|u_\alpha\|_{L^{p-1}(\Omega)})$ is a positive constant.
\end{proof}

\begin{remark}\label{integr}
    We notice that, for any $\alpha >0$, the unique weak solution $u_\alpha$ of \eqref{pb:RF}, with $q \geq p$, is actually in $L^{q-1}(\mathbb{R}^n \setminus \Omega)$. Indeed, this follows from the fact that $u_\alpha \in \mathcal{W}^{s,p}$ and by the decay at infinity proved in Lemma~\ref{DECAY:TH}.
\end{remark}
\begin{lemma}\label{LINTE:L}
    For any $\alpha > 0$, let $u_\alpha \in \mathcal{W}^{s,p}$ be the unique weak solution of \eqref{pb:RF} with $q\geq p$. Then,
    \begin{equation*}
        \int_{\mathbb{R}^n \setminus \Omega} |u_\alpha|^{q-2}u_\alpha \ dx= \frac{1}{\alpha} \int_\Omega f \ dx.
    \end{equation*}
\end{lemma}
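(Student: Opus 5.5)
The natural approach is to test the weak formulation in Definition~\ref{DEBOLE} with the constant function $\varphi\equiv 1$. Then the Gagliardo term vanishes identically, since $\varphi(x)-\varphi(y)=0$, and we are left with
\begin{equation*}
\alpha\int_{\mathbb{R}^n\setminus\Omega}|u_\alpha|^{q-2}u_\alpha\,dx=\int_\Omega f\,dx.
\end{equation*}
This is exactly the claim. The difficulty is that $1\notin\mathcal{W}^{s,p}$, because constants are not in $L^q(\mathbb{R}^n\setminus\Omega)$. So I would approximate $1$ by cutoffs. Take $\eta\in C^\infty_c(\mathbb{R}^n)$ with $0\le\eta\le1$, $\eta=1$ on $B_1$ and $\operatorname{supp}\eta\subset B_2$, and set $\varphi_k(x):=\eta(x/k)$ for $k$ large, so that $\Omega\subset B_R\subset B_k$.

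First I would check that $\varphi_k\in\mathcal{W}^{s,p}$. It is bounded with compact support, hence lies in $L^q$. It is Lipschitz and bounded, so its Gagliardo seminorm over $\mathbb{R}^{2n}\setminus(\mathbb{R}^n\setminus\Omega)^2$ is finite. Since $\varphi_k=1$ on $B_k\supset\Omega$, we get $\int_\Omega f\varphi_k=\int_\Omega f$. This is literal when $f$ is an integrable function. If $f$ is a general dual element I would interpret $\int_\Omega f$ as in the statement; any two test functions equal to $1$ near $\Omega$ differ by an element vanishing on $\Omega$, and since the pairing is written as $\int_\Omega f\varphi$, it only depends on $\varphi|_\Omega$.

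Next, the exterior term converges. By Remark~\ref{integr} (which rests on Lemma~\ref{DECAY:TH} and requires $q\ge p$), $|u_\alpha|^{q-1}\in L^1(\mathbb{R}^n\setminus\Omega)$. Dominated convergence then gives $\int_{\mathbb{R}^n\setminus\Omega}|u_\alpha|^{q-2}u_\alpha\varphi_k\to\int_{\mathbb{R}^n\setminus\Omega}|u_\alpha|^{q-2}u_\alpha$.

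The main obstacle is showing that the nonlocal term vanishes in the limit. Only pairs with at least one point in $\Omega$ contribute, and for $x\in\Omega$ we have $\varphi_k(x)=1$. So the term equals, up to constants and symmetry,
\begin{equation*}
\int_\Omega\int_{\mathbb{R}^n\setminus B_k}\frac{|u_\alpha(x)-u_\alpha(y)|^{p-2}(u_\alpha(x)-u_\alpha(y))(1-\varphi_k(y))}{|x-y|^{n+sp}}\,dy\,dx.
\end{equation*}
This is bounded by $\int_\Omega\int_{\mathbb{R}^n\setminus B_k}|u_\alpha(x)-u_\alpha(y)|^{p-1}|x-y|^{-n-sp}\,dy\,dx$. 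That integrand is integrable on $\Omega\times(\mathbb{R}^n\setminus\Omega)$ by Hölder's inequality: split it as $|u(x)-u(y)|^{p-1}|x-y|^{-(n+sp)(p-1)/p}$ times $|x-y|^{-(n+sp)/p}$. The first factor lies in $L^{p/(p-1)}$ thanks to the finite seminorm. The second lies in $L^p(\Omega\times(\mathbb{R}^n\setminus B_k))$, since $|x-y|\ge|y|/2$ there and $\int_{|y|>k}|y|^{-n-sp}\,dy<\infty$. So by dominated convergence (or directly, since the second $L^p$ norm is $O(k^{-s})$) the term tends to $0$ as $k\to\infty$.

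Passing to the limit in the weak formulation with $\varphi_k$ then yields the identity.
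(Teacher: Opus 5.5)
Your proposal is correct and follows the paper's proof essentially step by step. Both arguments test the weak formulation with cutoffs equal to $1$ on a large ball containing $\Omega$. Both then kill the mixed Gagliardo term by H\"older's inequality, using $\int_{\Omega\times(\mathbb{R}^n\setminus B_k)}|x-y|^{-n-sp}\,dx\,dy\le C|\Omega|k^{-sp}$, and pass to the limit in the exterior term by dominated convergence via $u_\alpha\in L^{q-1}(\mathbb{R}^n\setminus\Omega)$ from Remark~\ref{integr}. Your remark on reading $\int_\Omega f\varphi_k=\int_\Omega f$ when $f$ is only a dual element addresses a point the paper asserts without comment.
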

\begin{proof}
Since $\Omega$ is bounded, it follows that there exists $R_0>0$ such that $\Omega \subset B_{R_0}$.

    Let~$R>2R_0$ and let~$\phi_R \in C_c^\infty(\mathbb{R}^n)$ be a standard cutoff function such that $0\leq \phi_R \leq 1$ and
    \begin{equation*}
        \begin{cases}
            \phi_R=1 & \text{in } B_{R},\\
            \phi_R=0 & \text{in } B_{2R}^c.
        \end{cases}
    \end{equation*}
Using $\phi_R$ as a test function in the weak formulation of~\eqref{pb:RF}, we get
    \begin{equation}\label{weak_pass}
\begin{split}
        &\frac{c_{n,s,p}}{2} \int_{\mathbb{R}^{2n}\setminus (\mathbb{R}^n\setminus \Omega)^2} \frac{|u_\alpha(x) - u_\alpha(y)|^{p-2}(u_\alpha(x) - u_\alpha(y))(\phi_R(x)-\phi_R(y))}{|x-y|^{n+sp}} \ dxdy\\
& \qquad\qquad\qquad+\alpha \int_{\mathbb{R}^n \setminus \Omega} |u_\alpha|^{q-2}u_\alpha \phi_R\ dx = \int_\Omega f \phi_R dx.
\end{split}
    \end{equation}
    
    Since $\phi_R(x)=\phi_R(y)=1$ for any $x,y \in \Omega$, we obtain
    \begin{equation*}
        \int_{\Omega \times \Omega} \frac{|u_\alpha(x) - u_\alpha(y)|^{p-2}(u_\alpha(x) - u_\alpha(y))(\phi_R(x)-\phi_R(y))}{|x-y|^{n+sp}} \ dxdy=0
    \end{equation*}
    and
    \begin{equation*}
        \int_\Omega f\phi_Rdx= \int_\Omega fdx.
    \end{equation*}
    
    We now estimate the mixed term in~\eqref{weak_pass}. By H\"older's inequality, it follows that
    \begin{equation*}
    \begin{split}
        &\left|\int_{\Omega\times (\mathbb{R}^n \setminus \Omega)}\frac{|u_\alpha(x) - u_\alpha(y)|^{p-2}(u_\alpha(x) - u_\alpha(y))(\phi_R(x)-\phi_R(y))}{|x-y|^{n+sp}} \ dxdy \right|\\
        & \qquad\qquad\leq [u_\alpha]_{W^{s,p}}^{p-1}\left(
        \int_{\Omega\times (\mathbb{R}^n \setminus \Omega)}\frac{|\phi_R(x)-\phi_R(y)|^p}{|x-y|^{n+sp}} \ dxdy \right)^{\frac{1}{p}}.
    \end{split}
    \end{equation*}
    Since $x \in \Omega$, we note that
    \begin{equation*}
        \operatorname{supp}\{\phi_R(x) -\phi_R(y)\}\subset\{|y| > R\}.
    \end{equation*}
    Therefore,
    \begin{equation*}
        I_R:=\int_{\Omega\times (\mathbb{R}^n \setminus \Omega)}\frac{|\phi_R(x)-\phi_R(y)|^p}{|x-y|^{n+sp}} \ dxdy \le \int_{\mathbb{R}^n\setminus B_R} |1-\phi_R(y)|^p \left(\int_\Omega\frac{dx}{|x-y|^{n+sp}} \ dx\right)\ dy.
    \end{equation*}
    In addition, since $\Omega \subset B_{R_0}$ and $|y|>R$, we have
    \begin{equation*}
        |x-y| \geq |y|-|x| \geq |y| -R_0 > \frac{|y|}{2}.
    \end{equation*}
    Hence,
    \begin{equation*}
        I_R \leq \frac{C(n,s,p)|\Omega|}{R^{sp}} \rightarrow 0 \quad \text{as } R \rightarrow +\infty,
    \end{equation*}
    and the mixed term in~\eqref{weak_pass} vanishes in the limit as $R \rightarrow + \infty$.

    Finally, since $\phi_R \rightarrow 1$ pointwise as $R\rightarrow + \infty$ and since, by Remark~\ref{integr}, $u_\alpha \in L^{q-1}(\mathbb{R}^n \setminus \Omega)$, by the Dominated Convergence Theorem, it follows that
    \begin{equation*}
        \lim_{R \rightarrow + \infty} \int_{\mathbb{R}^n \setminus \Omega} |u_\alpha|^{q-2}u_\alpha \phi_R dx = \int_{\mathbb{R}^n \setminus \Omega}|u_\alpha|^{q-2}u_\alpha dx.
    \end{equation*}
Using these observations and passing to the limit as $R\rightarrow+\infty$ in \eqref{weak_pass}, the thesis follows.
\end{proof}

We now deal with the properties of the map~$\alpha \mapsto E_\alpha$.

\begin{corollary}\label{LIP}
    The map $\alpha \mapsto E_\alpha$ is locally Lipschitz continuous on $(0,+\infty)$.
\end{corollary}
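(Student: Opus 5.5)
We prove that $\alpha\mapsto E_\alpha$ is locally Lipschitz by combining the minimality of $u_\alpha$ with a uniform bound on the exterior $L^q$-norm of the minimizers. Since $J_\alpha(u)$ is affine in $\alpha$ for fixed $u$, for $\alpha,\beta>0$ we have
\begin{equation*}
E_\beta\le J_\beta(u_\alpha)=J_\alpha(u_\alpha)+\frac{\beta-\alpha}{q}\int_{\mathbb{R}^n\setminus\Omega}|u_\alpha|^q\,dx
=E_\alpha+\frac{\beta-\alpha}{q}\int_{\mathbb{R}^n\setminus\Omega}|u_\alpha|^q\,dx ,
\end{equation*}
where Lemma~\ref{esun} gives $J_\alpha(u_\alpha)=E_\alpha$. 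Exchanging the roles of $\alpha$ and $\beta$, we obtain
\begin{equation*}
|E_\alpha-E_\beta|\le \frac{|\alpha-\beta|}{q}\max\left\{\int_{\mathbb{R}^n\setminus\Omega}|u_\alpha|^q\,dx,\ \int_{\mathbb{R}^n\setminus\Omega}|u_\beta|^q\,dx\right\}.
\end{equation*}
It therefore suffices to show that, for any compact interval $[a,b]\subset(0,+\infty)$, the quantity $\int_{\mathbb{R}^n\setminus\Omega}|u_\alpha|^q\,dx$ is bounded uniformly for $\alpha\in[a,b]$.

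This uniform bound is the only real step, and we get it from the coercivity argument of Lemma~\ref{coercivity}. Testing with $u=0$ gives $E_\alpha\le J_\alpha(0)=0$, so $J_\alpha(u_\alpha)\le0$. Since $\alpha\ge a$, this yields
\begin{equation*}
\frac{c_{n,s,p}}{2p}[u_\alpha]^p_{W^{s,p}(\Omega)}+\frac{a}{q}\|u_\alpha\|^q_{L^q(\mathbb{R}^n\setminus\Omega)}\le \|f\|_{(\mathcal{W}^{s,p})^*}\|u_\alpha\|_{\mathcal{W}^{s,p}}.
\end{equation*}
By \eqref{INEQ}, the norm $\|u_\alpha\|_{\mathcal{W}^{s,p}}$ is controlled by $C\bigl([u_\alpha]_{W^{s,p}(\Omega)}+\|u_\alpha\|_{L^q(\mathbb{R}^n\setminus\Omega)}\bigr)$. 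Write $X:=[u_\alpha]_{W^{s,p}(\Omega)}+\|u_\alpha\|_{L^q(\mathbb{R}^n\setminus\Omega)}$. The left-hand side is bounded below by $c\min\{1,a\}(X^p+X^q)/C'$, up to a constant depending only on $p$ and $q$, while the right-hand side is at most $C\|f\|_{(\mathcal{W}^{s,p})^*}X$. Since $p,q>1$, this forces $X\le M$, where $M$ depends only on $a$, $f$, $n$, $s$, $p$, $q$ and $\Omega$, and not on $\alpha$.

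It follows that $\int_{\mathbb{R}^n\setminus\Omega}|u_\alpha|^q\,dx\le M^q$ for every $\alpha\ge a$. Hence $E_\alpha$ is Lipschitz on $[a,+\infty)$ with constant $M^q/q$, and in particular locally Lipschitz on $(0,+\infty)$.
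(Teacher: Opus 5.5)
Your proof is correct, but it takes a different route from the paper. The paper's argument is soft. For each fixed $u$, the map $\alpha\mapsto J_\alpha(u)$ is affine, so $E_\alpha$ is an infimum of affine functions and hence concave. A concave function that is finite on the open interval $(0,+\infty)$ (here $-\infty<E_\alpha\le J_\alpha(0)=0$) is automatically locally Lipschitz. You instead use the minimizers from Lemma~\ref{esun} to obtain the explicit two-sided bound
\[
\frac{\beta-\alpha}{q}\int_{\mathbb{R}^n\setminus\Omega}|u_\beta|^q\,dx\le E_\beta-E_\alpha\le\frac{\beta-\alpha}{q}\int_{\mathbb{R}^n\setminus\Omega}|u_\alpha|^q\,dx .
\]
This exhibits $\frac1q\|u_\alpha\|^q_{L^q(\mathbb{R}^n\setminus\Omega)}$ as a supergradient of the concave map $\alpha\mapsto E_\alpha$. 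You then bound these supergradients uniformly through the coercivity estimate \eqref{INEQ}.

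The paper's argument is shorter and needs nothing beyond the finiteness of $E_\alpha$. Yours is quantitative and gives more:
\begin{itemize}
\item a Lipschitz constant $M(a)^q/q$ on all of $[a,+\infty)$, not just on compact subintervals;
\item the uniform bound $\sup_{\alpha\ge a}\|u_\alpha\|_{\mathcal{W}^{s,p}}<+\infty$;
\item exactly the difference-quotient sandwich that the paper later rederives in Lemma~\ref{COEC1}, where it obtains boundedness of $u_{\alpha+\delta_k}$ only by passing through Corollary~\ref{LIP} and coercivity.
\end{itemize}

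There is one small slip. Set $Y:=[u_\alpha]_{W^{s,p}(\Omega)}$ and $Z:=\|u_\alpha\|_{L^q(\mathbb{R}^n\setminus\Omega)}$, so $X=Y+Z$. Your claimed lower bound $\frac{c_{n,s,p}}{2p}Y^p+\frac aq Z^q\ge c\,(X^p+X^q)$ is false when $p\neq q$; for instance, take $p<q$, $Z=0$ and $Y=X\to+\infty$. What does hold is $Y^p+Z^q\ge 2^{-\max\{p,q\}}\min\{X^p,X^q\}$, because one of $Y$, $Z$ is at least $X/2$. Since $\min\{X^p,X^q\}/X\to+\infty$ as $X\to+\infty$ (both exponents exceed $1$), your conclusion $X\le M$ with $M$ independent of $\alpha\ge a$ is unaffected.
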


\begin{proof}
This indeed follows from the fact that the quantity $E_\alpha$ can be expressed as the infimum over a family of affine functions in the parameter $\alpha$. It is well
known that the pointwise infimum of affine functions is concave and upper semicontinuous. As a consequence we get the thesis.
\end{proof}

We can actually sharpen Corollary~\ref{LIP} and obtain the following result:

\begin{lemma}\label{COEC1}
    The map $\alpha \mapsto u_\alpha$ is continuous from $(0,+\infty)$ to $\mathcal{W}^{s,p}$. Furthermore, the map $\alpha \mapsto E_\alpha$ is of class $C^1$ and
    \begin{equation*}
        \frac{d E_\alpha}{d\alpha} = \frac{1}{q} \int_{\mathbb{R}^n\setminus \Omega} |u_\alpha|^q \ dx.
    \end{equation*}
\end{lemma}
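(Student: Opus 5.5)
Two things must be shown: continuity of $\alpha\mapsto u_\alpha$ in $\mathcal{W}^{s,p}$, and the envelope-type formula for $dE_\alpha/d\alpha$. I will prove continuity first and then obtain differentiability from two-sided comparison inequalities.

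\emph{Step 1: continuity.} Fix $\alpha_0>0$ and a sequence $\alpha_k\to\alpha_0$ with $\alpha_k\in[\alpha_0/2,2\alpha_0]$. Compare with the competitor $0$ to get $J_{\alpha_k}(u_{\alpha_k})\le J_{\alpha_k}(0)=0$. The coercivity estimate of Lemma~\ref{coercivity}, with constants uniform for $\alpha\ge\alpha_0/2$, then bounds $\{u_{\alpha_k}\}$ in $\mathcal{W}^{s,p}$. Up to a subsequence, $u_{\alpha_k}\rightharpoonup v$ weakly in $\mathcal{W}^{s,p}$. For every $w\in\mathcal{W}^{s,p}$ we have $J_{\alpha_k}(u_{\alpha_k})\le J_{\alpha_k}(w)\to J_{\alpha_0}(w)$. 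Weak lower semicontinuity of each term gives $J_{\alpha_0}(v)\le\liminf J_{\alpha_k}(u_{\alpha_k})$; here the $\alpha$-dependence is harmless because $(\alpha_k-\alpha_0)\int|u_{\alpha_k}|^q\to0$ by boundedness. Hence $v$ minimizes $J_{\alpha_0}$, so $v=u_{\alpha_0}$ by Lemma~\ref{esun}, and the whole sequence converges weakly. Taking $w=u_{\alpha_0}$ also yields convergence of energies $J_{\alpha_0}(u_{\alpha_k})\to J_{\alpha_0}(u_{\alpha_0})$. Since the linear term converges by weak convergence, the sum of the two convex terms converges, and each converges separately by lower semicontinuity. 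So $[u_{\alpha_k}]_{W^{s,p}}\to[u_{\alpha_0}]_{W^{s,p}}$ and $\|u_{\alpha_k}\|_{L^q(\mathbb{R}^n\setminus\Omega)}\to\|u_{\alpha_0}\|_{L^q(\mathbb{R}^n\setminus\Omega)}$. The relevant spaces ($L^p$ of the weighted difference quotients on $\mathbb{R}^{2n}\setminus(\mathbb{R}^n\setminus\Omega)^2$, and $L^q$) are uniformly convex, so weak convergence plus convergence of norms gives strong convergence. Compactness in $L^p(\Omega)$ handles the remaining part of the norm. This gives strong convergence in $\mathcal{W}^{s,p}$.

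\emph{Step 2: derivative.} Set $g(\alpha):=\frac1q\int_{\mathbb{R}^n\setminus\Omega}|u_\alpha|^q$. Test $J_\beta$ with $u_\alpha$ and $J_\alpha$ with $u_\beta$:
\begin{equation*}
E_\beta\le J_\beta(u_\alpha)=E_\alpha+(\beta-\alpha)g(\alpha),\qquad E_\alpha\le E_\beta+(\alpha-\beta)g(\beta).
\end{equation*}
Combining the two gives
\begin{equation*}
(\beta-\alpha)g(\beta)\le E_\beta-E_\alpha\le(\beta-\alpha)g(\alpha).
\end{equation*}
Divide by $\beta-\alpha$, treating the signs $\beta>\alpha$ and $\beta<\alpha$ separately. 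Step 1 gives $g(\beta)\to g(\alpha)$, since $u_\beta\to u_\alpha$ in $L^q(\mathbb{R}^n\setminus\Omega)$. Both difference quotients are therefore squeezed to $g(\alpha)$, which proves \eqref{2PSK:-1eidk}. Continuity of $g$, again from Step 1, gives the $C^1$ regularity.

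\emph{Main obstacle.} The delicate point is the upgrade from weak to strong convergence in Step 1. It needs the convergence of each convex piece separately, obtained from the sandwich $\limsup$/$\liminf$ argument, together with uniform convexity of the underlying $L^p$ and $L^q$ spaces (Radon--Riesz). Everything else follows from minimality and the elementary comparison inequalities.
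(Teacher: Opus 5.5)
Your proposal is correct and follows essentially the same route as the paper. Both arguments use a uniform bound, weak compactness, identification of the limit through uniqueness of the minimizer, separate convergence of the two convex pieces upgraded to strong convergence, and then the two-sided comparison $(\beta-\alpha)g(\beta)\le E_\beta-E_\alpha\le(\beta-\alpha)g(\alpha)$ for the derivative. The only differences are minor: you obtain the uniform bound and the convergence of the energies directly from minimality (comparing with $0$ and with $u_{\alpha_0}$) rather than through the local Lipschitz continuity of Corollary~\ref{LIP}, and you spell out the uniform convexity (Radon--Riesz) step that the paper leaves implicit.
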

\begin{proof}
Let $\alpha >0$ and let~$\delta_k $ be an
infinitesimal sequence 
as $k \rightarrow + \infty$, such that $\alpha + \delta_k>0$. Let~$u_{\alpha + \delta_k}$ be the unique minimizer of $J_{\alpha + \delta_k}$, according
to Lemma~\ref{esun},
namely $E_{\alpha + \delta_k} = J_{\alpha+ \delta_k}(u_{\alpha + \delta_k})$. 

We use Corollary~\ref{LIP} to say that, for~$k$ sufficiently large,
$$ |E_{\alpha + \delta_k} |\le |E_{\alpha + \delta_k} - E_{\alpha}|+|E_{\alpha} |\le L\delta_k+ |E_{\alpha} |\le
L+|E_{\alpha} |,$$
where~$L$ is the Lipschitz constant in the interval~$[\alpha,\alpha+1]$.
This entails that, for $k$ sufficiently large, $E_{\alpha+\delta_k}$ is bounded in~$k$. 

Hence, by Lemma~\ref{coercivity}, it follows that $\|u_{\alpha+ \delta_k}\|_{\mathcal{W}^{s,p}} \leq C$, where $C$ is a positive constant independent of~$k$. Therefore, there exists~$v \in \mathcal{W}^{s,p}$ such that, up to a subsequence,
\begin{equation}\label{convst}
u_{\alpha + \delta_k} \rightharpoonup v \quad \text{in } \mathcal{W}^{s,p}\qquad{\mbox{ and}}\qquad
u_{\alpha + \delta_k} \rightarrow v \quad \text{in }L^p(\Omega),
\end{equation}
see e.g.~\cite[Theorem~7.1]{GUIDA} for the compact embedding.

Moreover, since $f \in (\mathcal{W}^{s,p})^*$, by the weak lower semicontinuity of both the seminorm and the $L^q$-norm, together with Corollary~\ref{LIP}, we obtain
\begin{equation*}
    J_\alpha(v) \leq \liminf_{k\rightarrow + \infty} J_{\alpha+\delta_k}(u_{\alpha+\delta_k}) = \lim_{k\rightarrow+\infty} E_{\alpha+\delta_k} = E_\alpha.
\end{equation*}
This says that~$v$ is a minimizer for~$J_\alpha$.

Accordingly, by the uniqueness of the minimizer proved
in Lemma~\ref{esun}, it follows that~$v=u_\alpha$. Therefore, by~\eqref{convst}, we have
\begin{equation}\label{2.11BIS}
u_{\alpha + \delta_k} \rightharpoonup u_\alpha \quad \text{in } \mathcal{W}^{s,p}\qquad{\mbox{and}}\qquad
u_{\alpha + \delta_k} \rightarrow u_\alpha \quad \text{in }L^p(\Omega).
\end{equation}

Now we claim that
\begin{equation}\label{CLAIM}
    u_{\alpha + \delta_k} \rightarrow u_\alpha \quad \text{in } \mathcal{W}^{s,p}.
\end{equation}
Notice that this claim entails the continuity of the map~$\alpha \mapsto u_\alpha$.

In order to prove the claim in~\eqref{CLAIM}, we notice that, by the continuity of~$E_\alpha$ (entailed by Corollary~\ref{LIP}), as~$k \rightarrow + \infty$,
\begin{equation}\label{cndmwiy43764738cghdsvj}
J_{\alpha+\delta_k}(u_{\alpha+\delta_k}) \rightarrow J_\alpha(u_\alpha)\end{equation}
and, since $f\in (\mathcal{W}^{s,p})^*$, it follows from~\eqref{2.11BIS} that,
as~$k \rightarrow+\infty$,
\begin{equation}\label{cndmwiy43764738cghdsvj2}
    \int_\Omega f u_{\alpha+\delta_k}\ dx \rightarrow \int_\Omega fu_\alpha \ dx.
\end{equation}

We now introduce the following quantities:
\begin{eqnarray*}
   && J^1_k : = \frac{c_{n,s,p}}{2p} \int_{\mathbb{R}^{2n}\setminus (\mathbb{R}^n\setminus \Omega)^2} \frac{|u_{\alpha+\delta_k}(x)-u_{\alpha+\delta_k}(y)|^p}{|x-y|^{n+sp}} \ dxdy, \qquad J^2_k :=\frac{\alpha}{q} \int_{\mathbb{R}^n \setminus \Omega} |u_{\alpha+\delta_k}|^q \ dx\\
&&    J^1 : = \frac{c_{n,s,p}}{2p} \int_{\mathbb{R}^{2n}\setminus (\mathbb{R}^n\setminus \Omega)^2} \frac{|u_{\alpha}(x)-u_{\alpha}(y)|^p}{|x-y|^{n+sp}} \ dxdy, \qquad J^2 :=\frac{\alpha}{q} \int_{\mathbb{R}^n \setminus \Omega} |u_{\alpha}|^q \ dx.
\end{eqnarray*}
With this notation, 
exploiting~\eqref{cndmwiy43764738cghdsvj}
and~\eqref{cndmwiy43764738cghdsvj2},
we see that, as~$k \rightarrow + \infty$,
\begin{equation*}
    J^1_k + \left(1+\frac{\delta_k}{\alpha}\right) J^2_k
    =J_{\alpha+\delta_k}(u_{\alpha+\delta_k}) +\int_{\Omega}
    fu_{\alpha+\delta_k}
     \rightarrow 
    J_{\alpha}(u_{\alpha}) +\int_{\Omega}
    fu_{\alpha}
     =     J^1 + J^2.
\end{equation*}
In light of this convergence,
for $k$ large enough, we have the following bounds
\begin{equation}\label{boundJ1J2}
    0 \leq \frac{J^2_k}{2} \leq J^1_k + \left(1+\frac{\delta_k}{\alpha}\right) J^2_k \leq 2(J^1 + J^2).
\end{equation}
As a result,
\begin{equation}\label{limitesomma}
    \lim_{k\rightarrow+\infty} (J_k^1-J^1)+(J_k^2-J^2)
    =\lim_{k\rightarrow+\infty}J^1_k+\left(1+\frac{\delta_k}\alpha\right)J^2_k-
    \frac{\delta_k}\alpha J^2_k-J^1-J^2
    =0.
\end{equation}

We also notice that, by \eqref{boundJ1J2}, $J_k^1$ and $J_k^2$ are uniformly bounded with respect to $k$. Therefore, there exist $\bar J^1$ and $\bar J^2$ such that, up to a subsequence,
\begin{equation*}
    \lim_{k\rightarrow + \infty} J_k^1 = \bar J^1 \qquad \text{and} \qquad \lim_{k\rightarrow + \infty} J_k^2 = \bar J^2.
\end{equation*}
By the weak lower semicontinuity of both the seminorm and the $L^q$-norm, we deduce that
\begin{equation*}
    J^1 \leq \liminf_{k\rightarrow+\infty} J^1_k = \bar J^1 \qquad{\mbox{and}}\qquad J^2 \leq \liminf_{k\rightarrow+\infty} J^2_k = \bar J^2.
\end{equation*}

We claim that
\begin{equation}\label{cms-r473t65i65i}
J^1=\bar J^1\qquad{\mbox{and}}\qquad  J^2=\bar J^2.
\end{equation}
For this, assume, by contradiction, that either $J^1 < \bar J^1$ or $J^2 < \bar J^2$. From \eqref{limitesomma}, we deduce that
\begin{equation*}
    0=\lim_{k\rightarrow+\infty} (J_k^1-J^1)+(J_k^2-J^2) = \bar J^1 - J^1 + \bar J^2 -J^2 > 0.
\end{equation*}
This contradiction establishes~\eqref{cms-r473t65i65i}.

We now observe that~\eqref{cms-r473t65i65i},
together with~\eqref{2.11BIS},
implies~\eqref{CLAIM}.

To compute the first derivative of $E_\alpha$ with respect to $\alpha$, we test the functional $E_{\alpha + \delta_k}$ with $u_\alpha$ and the functional $E_\alpha$ with $u_{\alpha + \delta_k}$, obtaining
    \begin{equation*}
    \begin{split}
        E_\alpha &\leq \frac{c_{n,s,p}}{2p} \int_{\mathbb{R}^{2n}\setminus (\mathbb{R}^n\setminus \Omega)^2} \frac{|u_{\alpha + \delta_k}(x)-u_{\alpha + \delta_k}(y)|^p}{|x-y|^{n+sp}} \ dxdy + \frac{\alpha}{q} \int_{\mathbb{R}^n \setminus \Omega} |u_{\alpha + \delta_k}|^q \ dx - \int_\Omega f u_{\alpha + \delta_k} \ dx\\
        &= E_{\alpha + \delta_k} - \frac{\delta_k}{q} \int_{\mathbb{R}^n \setminus \Omega} |u_{\alpha + \delta_k}|^q \ dx
    \end{split}
    \end{equation*}
    and
    \begin{equation*}
    \begin{split}
        E_{\alpha + \delta_k} &\leq \frac{c_{n,s,p}}{2p} \int_{\mathbb{R}^{2n}\setminus (\mathbb{R}^n\setminus \Omega)^2} \frac{|u_{\alpha}(x)-u_{\alpha}(y)|^p}{|x-y|^{n+sp}} \ dxdy + \frac{\alpha + \delta_k}{q} \int_{\mathbb{R}^n \setminus \Omega} |u_{\alpha}|^q \ dx - \int_\Omega f u_{\alpha} \ dx\\
        &= E_{\alpha} + \frac{\delta_k}{q} \int_{\mathbb{R}^n \setminus \Omega} |u_{\alpha}|^q \ dx.
    \end{split}
    \end{equation*}
    Combining the previous two estimates, we deduce
    \begin{equation*}
        \frac{\delta_k}{q} \int_{\mathbb{R}^n \setminus \Omega} |u_{\alpha + \delta_k}|^q \ dx \leq E_{\alpha + \delta_k} - E_\alpha \leq \frac{\delta_k}{q} \int_{\mathbb{R}^n \setminus \Omega} |u_{\alpha}|^q \ dx.
    \end{equation*}
    By the continuity of the map $\alpha \mapsto u_\alpha$,
    we conclude that
    \begin{equation*}
        \lim_{k \rightarrow + \infty} \frac{E_{\alpha + \delta_k} - E_\alpha}{\delta_k} = \frac{1}{q} \int_{\mathbb{R}^n \setminus \Omega} |u_{\alpha}|^q \ dx,
    \end{equation*} as desired.
\end{proof}

With this preliminary work, we are ready to complete the proof of Theorem~\ref{PF:MA1}.

\begin{proof}[Proof of Theorem~\ref{PF:MA1}] The existence and minimality of~$u_\alpha$
has been established in Lemma~\ref{esun} and
the uniqueness claim is a consequence of Lemma~\ref{LEM:UNI}.

The decay estimate in~\eqref{dePSK:DE} follows from Lemma~\ref{DECAY:TH}
and the integral identity in~\eqref{LINTE}
is given by Lemma~\ref{LINTE:L}.

The regularity properties of the maps~$\alpha
\mapsto u_\alpha$ and~$\alpha \mapsto E_\alpha$, as well as~\eqref{2PSK:-1eidk},
have been proven in Lemma~\ref{COEC1}.\end{proof}

\section{Dirichlet limit and proof of Theorem~\ref{D_exp}}\label{D}

In this section we deal with the asymptotic limit of $E_\alpha$ as $\alpha \rightarrow + \infty$. In this regime, the functional converges to the Dirichlet energy:
\begin{equation*}
    E_\infty = \inf_{u\in W^{s,p}_0(\Omega)} \left\{ \frac{c_{n,s,p}}{2p} \int_{\mathbb{R}^{2n}\setminus (\mathbb{R}^n\setminus \Omega)^2} \frac{|u(x)-u(y)|^p}{|x-y|^{n+sp}} \ dxdy - \int_\Omega f u \ dx\right\},
\end{equation*}which is
achieved by a unique function $u_\infty \in W^{s,p}_0(\Omega)$, where
\begin{equation*}
    W^{s,p}_0(\Omega) := \left\{ u \in L^p(\Omega) : \int_{\mathbb{R}^{2n}\setminus (\mathbb{R}^n\setminus \Omega)^2} \frac{|u(x)-u(y)|^p}{|x-y|^{n+sp}} \ dxdy < + \infty, \ u = 0 \ \text{ in } \mathbb{R}^n \setminus \Omega  \right\}.
\end{equation*}
We now prove the convergence rate of $E_\alpha$ to $E_\infty$, namely the result stated in Theorem~\ref{D_exp}.

\begin{proof}[Proof of Theorem~\ref{D_exp}]
    First of all, we recall that
    \begin{equation*}
        E_\alpha = \inf_{ u \in \mathcal{W}^{s,p}} \left\{\frac{c_{n,s,p}}{2p} \int_{\mathbb{R}^{2n}\setminus (\mathbb{R}^n\setminus \Omega)^2} \frac{|u(x)-u(y)|^p}{|x-y|^{n+sp}} \ dxdy + \frac{\alpha}{q} \int_{\mathbb{R}^n \setminus \Omega} |u|^q \ dx - \int_\Omega f u \ dx\right\}.
    \end{equation*}
    Let $\mu := 1 / (q-1)$. Performing the change of variable $u \mapsto u_\infty + \alpha^{-\mu} u$, we obtain
    \begin{equation*}
    \begin{split}
        E_\alpha &= \frac{c_{n,s,p}}{2p} \int_{\mathbb{R}^{2n}\setminus (\mathbb{R}^n\setminus \Omega)^2} \frac{|u_\infty(x)-u_\infty(y)|^p}{|x-y|^{n+sp}} \ dxdy - \int_\Omega f u_\infty \ dx\\
        &\quad +\inf_{ u \in \mathcal{W}^{s,p}} \Bigg\{ \frac{c_{n,s,p}}{2p}\int_{\mathbb{R}^{2n}\setminus (\mathbb{R}^n\setminus \Omega)^2} \Bigg(\frac{|u_\infty(x)-u_\infty(y) + \alpha^{-\mu}(u(x)-u(y))|^p}{|x-y|^{n+sp}}\\
        &\qquad\qquad\quad\qquad\qquad 
   - \frac{|u_\infty(x)-u_\infty(y)|^p}{|x-y|^{n+sp}}\Bigg)  dxdy + \frac{\alpha^{-\mu}}{q} \int_{\mathbb{R}^n \setminus \Omega} |u|^q dx - \alpha^{-\mu} \int_\Omega fu \ dx \Bigg\},
        \end{split}
        \end{equation*}
        where we used the identity $1-\mu q = -\mu$.
        
        Then, using the integration by parts formula given by
        \begin{equation*}
            \begin{split}
                \int_\Omega fu \ dx &= \int_\Omega (-\Delta)_p^s u_\infty u \ dx\\
                &=  \frac{c_{n,s,p}}{2} \int_{\mathbb{R}^{2n}\setminus (\mathbb{R}^n\setminus \Omega)^2} \frac{|u_\infty(x)-u_\infty(y)|^{p-2}(u_\infty(x)-u_\infty(y))(u(x)-u(y))}{|x-y|^{n+sp}} \ dxdy\\
                &\qquad\ \ - \int_{\mathbb{R}^n \setminus \Omega} N_s^p u_\infty u \ dx,
            \end{split}
        \end{equation*}
        we get
        \begin{equation*}
            E_\alpha = E_\infty + \alpha^{-\mu} R_\alpha,
        \end{equation*}
        where
        \begin{equation*}
        \begin{split}
            R_\alpha &:= \inf_{ u \in \mathcal{W}^{s,p}} \Bigg\{ \alpha^\mu \frac{c_{n,s,p}}{2p}\int_{\mathbb{R}^{2n}\setminus (\mathbb{R}^n\setminus \Omega)^2} \Bigg(\frac{|u_\infty(x)-u_\infty(y) + \alpha^{-\mu}(u(x)-u(y))|^p}{|x-y|^{n+sp}} - \frac{|u_\infty(x)-u_\infty(y)|^p}{|x-y|^{n+sp}}\\
            &\qquad\qquad\qquad\qquad\qquad\quad - p \alpha^{-\mu} \frac{|u_\infty(x)-u_\infty(y)|^{p-2}(u_\infty(x)-u_\infty(y))(u(x)-u(y))}{|x-y|^{n+sp}}\Bigg) dxdy\\
        &\qquad\qquad\qquad\qquad\qquad\quad +  \int_{\mathbb{R}^n \setminus \Omega} \left( \frac{1}{q}|u|^q + N_s^p u_\infty u\right) dx \Bigg\}.
        \end{split}
        \end{equation*}
        
Now, by the convexity of the function $t \mapsto |t|^p$, we obtain the following lower-bound
        \begin{equation}\label{cewr834956gfeuksa}\begin{split}
            R_\alpha &\geq \inf_{ u \in \mathcal{W}^{s,p}} \left\{\int_{\mathbb{R}^n \setminus \Omega} \left( \frac{1}{q}|u|^q + N_s^p u_\infty u\right) dx\right\}\\& \geq \inf_{ u \in L^q(\mathbb{R}^n \setminus \Omega)} \left\{\int_{\mathbb{R}^n \setminus \Omega} \left( \frac{1}{q}|u|^q + N_s^p u_\infty u\right) dx\right\}.\end{split}
        \end{equation}
   
    Suppose now that 
    \begin{equation}\label{nevwuit43yty8oguw}
    |N_s^p u_\infty|^\frac{2-q}{q-1}N_s^p u_\infty \in C_c^\infty(\mathbb{R}^n \setminus \Omega) \end{equation}
    and let $\widetilde u \in C_c^{\infty} (\mathbb{R}^n)$ be an extension to~$\mathbb{R}^n$
    of the function~$- |N_s^p u_\infty|^\frac{2-q}{q-1}N_s^p u_\infty$.

One can check that the infimum in~\eqref{cewr834956gfeuksa} is achieved by the function~$ \widetilde u$. Hence, we obtain the following lower-bound
        \begin{equation*}
        E_\alpha \geq  E_\infty + \frac{1-q}{q} \alpha^{-\mu}\int_{\mathbb{R}^n \setminus \Omega}|N^p_su_\infty|^\frac{q}{q-1} \ dx.
    \end{equation*}
   
    In order to obtain the desired upper-bound, we test the definition of $R_\alpha$ with the function~$\widetilde u$, obtaining
    \begin{equation*}
        \alpha^\mu (E_\alpha - E_\infty) \leq   \frac{1-q}{q} \int_{\mathbb{R}^n \setminus \Omega}|N^p_su_\infty|^\frac{q}{q-1} \ dx + g_\alpha,
    \end{equation*}
    where
    \begin{equation*}
        \begin{split}
           g_\alpha &:= \alpha^\mu\int_{\mathbb{R}^{2n}\setminus (\mathbb{R}^n\setminus \Omega)^2} \Bigg(\frac{|u_\infty(x)-u_\infty(y) + \alpha^{-\mu}(\widetilde u(x)-\widetilde u(y))|^p}{|x-y|^{n+sp}} - \frac{|u_\infty(x)-u_\infty(y)|^p}{|x-y|^{n+sp}}\\
            &\qquad\qquad\qquad\quad - p \alpha^{-\mu} \frac{|u_\infty(x)-u_\infty(y)|^{p-2}(u_\infty(x)-u_\infty(y))(\widetilde u(x)-\widetilde u(y))}{|x-y|^{n+sp}}\Bigg) \ dxdy.
        \end{split}
        \end{equation*}
        
        We claim now that 
        \begin{equation}\label{galpharuvnsbm0987}
        \lim_{\alpha\rightarrow+\infty}g_\alpha =0.
        \end{equation}
         Indeed, if $p>2$, by a Taylor expansion, it follows that
        \begin{equation*}
        \begin{split}
            |g_\alpha| &\leq \alpha^{-\mu} C_p \int_{\mathbb{R}^{2n}\setminus (\mathbb{R}^n\setminus \Omega)^2} \frac{|u_\infty(x)-u_\infty(y)|^{p-2} |\widetilde u(x)- \widetilde u(y)|^2}{|x-y|^{n+sp}} dxdy\\
            &\qquad + \alpha^{(1-p)\mu}C_p \int_{\mathbb{R}^{2n}\setminus (\mathbb{R}^n\setminus \Omega)^2} \frac{ |\widetilde u(x)- \widetilde u(y)|^p}{|x-y|^{n+sp}} dxdy,
        \end{split}
        \end{equation*}
        and, if $p \leq 2$, we have
        \begin{equation*}
            |g_\alpha | \leq \alpha^{(1-p)\mu}C_p \int_{\mathbb{R}^{2n}\setminus (\mathbb{R}^n\setminus \Omega)^2} \frac{ |\widetilde u(x)- \widetilde u(y)|^p}{|x-y|^{n+sp}} dxdy.
        \end{equation*}
        By H\"older's inequality, since $p>1$, the claim in~\eqref{galpharuvnsbm0987} immediately follows.
        
    The remaining part of the proof consists in removing the condition~\eqref{nevwuit43yty8oguw}.
    For this, we use an approximation argument.
    Since by assumption $|N_s^p u_\infty|^\frac{2-q}{q-1}N_s^p u_\infty \in L^q(\mathbb{R}^{n}\setminus \Omega)$, there exists a sequence $\{v_j\} \subset C_c^\infty(\mathbb{R}^{n}\setminus \Omega)$ such that
    \begin{equation}\label{strongL2}
        v_j \rightarrow -|N_s^p u_\infty|^\frac{2-q}{q-1}N_s^p u_\infty \qquad \text{in }\ L^q(\mathbb{R}^{n}\setminus \Omega) \ \text{as} \ j \rightarrow + \infty.
    \end{equation}
    Let $\widetilde v_j \in C_c^\infty(\mathbb{R}^n)$ be an extension of $v_j$ to the whole $\mathbb{R}^n$, namely such that $\widetilde v_j = v_j$ in $\mathbb{R}^{n}\setminus \Omega$. Using the same computations as before, replacing $\widetilde u$ with $\widetilde v_j$, we obtain
    \begin{equation*}
        \begin{split}
            \frac{1-q}{q} \int_{\mathbb{R}^n \setminus \Omega}|N^p_s u_\infty|^\frac{q}{q-1} \ dx &\leq \alpha^\mu (E_\alpha-E_\infty)\\
            &\leq \int_{\mathbb{R}^n \setminus \Omega}\left(\frac{1}{q} |v_j|^q + v_j N^p_su_\infty\right) \ dx + g_{\alpha,j}.
        \end{split}
    \end{equation*}
    Taking the limit as $\alpha \rightarrow +\infty$, since $g_{\alpha,j} \rightarrow 0$ as $\alpha \rightarrow +\infty$, we get
    \begin{equation*}
        \begin{split}
            \frac{1-q}{q} \int_{\mathbb{R}^n \setminus \Omega}|N^p_su_\infty|^\frac{q}{q-1} \ dx &\leq \liminf_{\alpha \rightarrow + \infty}\alpha^\mu (E_\alpha-E_\infty)\\
            &\leq \limsup_{\alpha \rightarrow + \infty}\alpha^\mu (E_\alpha-E_\infty)\\
            &\leq \int_{\mathbb{R}^n \setminus \Omega}\left(\frac{1}{q} |v_j|^q + v_j N^p_su_\infty \right)\ dx.
        \end{split}
    \end{equation*}
 
 We now take the limit as $j \rightarrow + \infty$ and, using \eqref{strongL2}, we conclude that
    \begin{equation*}
        \lim_{\alpha \rightarrow + \infty} \alpha^\mu(E_\alpha - E_\infty) = \frac{1-q}{q} \int_{\mathbb{R}^n \setminus \Omega}|N^p_su_\infty|^\frac{q}{q-1} \ dx.
    \end{equation*}
    
Gathering all the pieces of information, we have thereby established that, as~$\alpha \rightarrow + \infty$,
    \begin{equation*}
        E_\alpha = E_\infty + \alpha^{-\mu}\frac{1-q}{q} \int_{\mathbb{R}^n \setminus \Omega}|N^p_su_\infty|^\frac{q}{q-1} \ dx + o(\alpha^{-\mu}),
    \end{equation*} as desired.
\end{proof}

\section{Neumann limit and proof of Theorems~\ref{N_main} and~\ref{int_f_div}.}\label{N}
In this section, we study the asymptotic behavior of the functional $E_\alpha$ as $\alpha \to 0^+$, distinguishing the case in which it converges to the energy of a solution of the nonlocal Neumann problem from the case in which no such limit corresponds to a solution.
\subsection{The case \texorpdfstring{$\int_\Omega fdx=0$}{int f = 0}}
Let us consider the minimization problem
\begin{equation*}
    E_0 = \inf_{u \in W^{s,p}(\Omega)} J_0(u),
\end{equation*}
where
$$ J_0(u):=\frac{c_{n,s,p}}{2p} \int_{\mathbb{R}^{2n}\setminus (\mathbb{R}^n\setminus \Omega)^2} \frac{|u(x)-u(y)|^p}{|x-y|^{n+sp}} \ dxdy - \int_\Omega f u \ dx.$$
Notice that $E_0$ is achieved by a function $u_0 \in W^{s,p}(\Omega)$, unique up to additive constants, which exists in view of the compatibility condition $\int_\Omega f dx=0$. Notice that in this framework we consider $f \in (W^{s,p}(\Omega))^*$.

Moreover, any minimizer $u_0 \in W^{s,p}(\Omega)$ weakly solves
\begin{equation}\label{N_n_l}
    \begin{cases}
        (-\Delta)_p^s u_0 = f & \text{in } \Omega,\\
        N^p_s u_0 =0 & \text{in } \mathbb{R}^n \setminus \overline \Omega.
    \end{cases}
\end{equation}
%
%
%

We point out that
the convergence of the Robin functional~$E_\alpha$ to the Neumann functional~$E_0$ as~$\alpha \to 0^+$ is not straightforward. By~\cite[Proposition~3.13]{DRV}, any weak solution $u_0$ of the nonlocal Neumann problem \eqref{N_n_l}, with $p=2$, satisfies
    \begin{equation*}
        \lim_{|x|\rightarrow + \infty} u_0(x) = \frac{1}{|\Omega|} \int_\Omega u_0 \ dx, \quad \text{uniformly in } x.
    \end{equation*}
    Hence, also in the linear case, any minimizer $u_0$ achieving $E_0$ does not, in general, belong to~$L^q(\mathbb{R}^n \setminus \Omega)$, so we cannot directly test the definition of $E_\alpha$ using $u_0 \in W^{s,p}(\Omega)$. 
    
    To overcome this difficulty, we consider a cut-off function~$\eta_R \in C_c^\infty(B_{2R},[0,1])$, with~$\eta_R=1 $, in~$B_R$
    and we define
    a modified function 
    \begin{equation}\label{deffunctiouzero000}
    u_{0,R}:=T_R(u_0) \eta_R,\end{equation}
    where~$T_R(u_0):=\min\{u_0,R\}$
    
       As a consequence, for any given $R>0$, the function $u_{0,R}$ belongs to $L^q(\mathbb{R}^n \setminus \Omega)$, for any~$q > 1$. We then prove the following result, which establishes the convergence of the functional $J_0$ evaluated at $u_{0,R}$ to the minimum $E_0$ as $R \rightarrow + \infty$.

\begin{lemma}\label{convCO}
    Let $u_0 \in W^{s,p}(\Omega)$ be a minimizer achieving $E_0$.
    Then,
    \begin{equation*}
      \lim_{R\to+\infty}  J_0(u_{0,R})= E_0.
    \end{equation*}
\end{lemma}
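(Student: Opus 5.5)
We have to show that $J_0(u_{0,R})\to J_0(u_0)=E_0$, where $u_{0,R}=T_R(u_0)\eta_R$. The plan is to prove $u_{0,R}\to u_0$ in the sense of the seminorm, that is, $[u_{0,R}-u_0]_{W^{s,p}(\Omega)}\to 0$, together with $u_{0,R}\to u_0$ in $L^p(\Omega)$. Continuity of the seminorm and of the linear term then gives the claim. For the linear term, since $\Omega\subset B_R$ for $R$ large, we have $u_{0,R}=T_R(u_0)$ on $\Omega$. We also have $T_R(u_0)\to u_0$ in $L^p(\Omega)$ by dominated convergence, since $|T_R(u_0)|\le |u_0|$ for $R>0$. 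Thus convergence in $W^{s,p}(\Omega)$ yields $\int_\Omega f u_{0,R}\to\int_\Omega f u_0$, viewing $f\in (W^{s,p}(\Omega))^*$. The functional is normalized, with $f$ pairing against the whole norm.

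\emph{Step 1: truncation.} First handle $v_R:=T_R(u_0)$. The map $t\mapsto\min\{t,R\}$ is $1$-Lipschitz, so
\[
|(v_R-u_0)(x)-(v_R-u_0)(y)|\le 2|u_0(x)-u_0(y)|.
\]
Moreover the left-hand side tends to $0$ pointwise as $R\to\infty$. Dominated convergence in $\mathbb{R}^{2n}\setminus(\mathbb{R}^n\setminus\Omega)^2$, with the kernel $|x-y|^{-n-sp}$, gives $[v_R-u_0]_{W^{s,p}(\Omega)}\to0$.

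\emph{Step 2: cutoff.} Next compare $v_R\eta_R$ with $v_R$. On the relevant domain at least one variable lies in $\Omega\subset B_{R_0}$, where $\eta_R=1$. Writing $w_R:=v_R(1-\eta_R)$, we have $w_R(x)=0$ for $x\in\Omega$, so the seminorm of $w_R$ reduces to
\[
2\int_\Omega\int_{\mathbb{R}^n\setminus B_R}\frac{|v_R(y)|^p(1-\eta_R(y))^p}{|x-y|^{n+sp}}\,dy\,dx .
\]
This is the main obstacle. $v_R$ is bounded above by $R$ but not uniformly in $R$, and a bound of the form $R^p\cdot R^{-sp}$ is useless. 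Instead, I will split
\[
|v_R(y)|^p\le 2^{p-1}\bigl(|v_R(y)-v_R(x)|^p+|v_R(x)|^p\bigr)\le 2^{p-1}\bigl(|u_0(y)-u_0(x)|^p+|u_0(x)|^p\bigr),
\]
using again the $1$-Lipschitz property and $|T_R t|\le|t|$. The first piece contributes at most
\[
\int_\Omega\int_{\mathbb{R}^n\setminus B_R}\frac{|u_0(x)-u_0(y)|^p}{|x-y|^{n+sp}}\,dy\,dx ,
\]
which tends to $0$ as a tail of a convergent integral. The second piece is bounded by $\|u_0\|_{L^p(\Omega)}^p\,C R^{-sp}\to0$, using $|x-y|\ge|y|/2$, exactly as in the proof of Lemma~\ref{LINTE:L}.

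\emph{Step 3: conclusion.} Combining Steps 1 and 2 via Minkowski's inequality gives $[u_{0,R}-u_0]_{W^{s,p}(\Omega)}\to0$. Hence the Gagliardo energies converge by the continuity of the $p$-th power of the seminorm, and therefore $J_0(u_{0,R})\to J_0(u_0)=E_0$.
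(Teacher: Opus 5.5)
Your proof is correct. Its skeleton is the paper's: both show $[u_{0,R}-u_0]_{W^{s,p}(\Omega)}\to0$ by splitting the difference into a truncation error and a cutoff error. Both handle the truncation error by dominated convergence. Both handle the cutoff error through $|v(y)|^p\le 2^{p-1}(|v(y)-v(x)|^p+|v(x)|^p)$ with $x\in\Omega$, together with the $R^{-sp}$ tail bound.

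The differences are in the bookkeeping. The paper writes $u_{0,R}-u_0=(T_R(u_0)-u_0)\eta_R+u_0(\eta_R-1)$, so its truncation piece carries the cutoff. This produces an extra product-rule term $|\xi_R(y)|^p|\eta_R(x)-\eta_R(y)|^p$, with $\xi_R=T_R(u_0)-u_0$, which must be treated separately with the same splitting trick. Your ordering $(T_R(u_0)-u_0)+T_R(u_0)(\eta_R-1)$ leaves the truncation piece cutoff-free, so a single application of dominated convergence suffices. The price is that in the cutoff piece you must transfer the estimates from $T_R(u_0)$ back to $u_0$, which you do correctly via the $1$-Lipschitz property of $T_R$ and $|T_R(t)|\le|t|$ for $R>0$.

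Also, to pass from the vanishing of the seminorm of the difference to convergence of the Gagliardo energies, the paper uses a Taylor expansion of $t\mapsto|t|^p$, with remainder estimates split into the cases $p>2$ and $p\le2$. Your use of Minkowski's inequality and the reverse triangle inequality for the seminorm does this in one line. Your version is therefore somewhat shorter and avoids the case distinction, at no cost in generality.
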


\begin{proof}
We observe that~$u_{0,R}\in W^{s,p}(\Omega)$ and we
set~$\omega_R := u_{0,R} - u_0$. Notice that $\omega_R$ can be rewritten as
    \begin{eqnarray*}
        &&\omega_R = \omega_{R,1} + \omega_{R,2},\\
        &&{\mbox{where }}\quad
         \omega_{R,1}:=
        (T_R(u_0)-u_0)\eta_R \quad{\mbox{and}}\quad
         \omega_{R,2}:= u_0(\eta_R - 1)
  .  \end{eqnarray*}
   
    We set
    $$    \widetilde J_R:=\int_{\mathbb{R}^{2n}\setminus (\mathbb{R}^n\setminus \Omega)^2} \frac{|\omega_R(x)-\omega_R(y)|^p}{|x-y|^{n+sp}} \ dxdy$$ and
we claim that
    \begin{equation}\label{J_tilde}
       \lim_{R\to+\infty} \widetilde J_R=0.
    \end{equation}
    First of all, by the convexity of the map $t \mapsto |t|^p$, we deduce that
    \begin{equation}\label{J_tildeBIS}
        |\omega_R(x)-\omega_R(y)|^p \leq 2^{p-1} \Big(|\omega_{R,1}(x)-\omega_{R,1}(y)|^p + |\omega_{R,2}(x)-\omega_{R,2}(y)|^p\Big).
    \end{equation}

   Let us now deal with the term $\omega_{R,1}$. By convexity, we obtain
    \begin{equation}\label{y84yghjabvja-fejwjkf6543frgehdju435irufw0}
        |\omega_{R,1}(x)-\omega_{R,1}(y)|^p \leq 2^{p-1}\Big(|\xi_R(x)-\xi_R(y)|^p |\eta_R(x)|^p + |\xi_R(y)|^p |\eta_R(x)-\eta_R(y)|^p\Big),
    \end{equation}
    where $\xi_R := T_R(u_0)-u_0$. 

We know that $\xi_R(x)\rightarrow 0$ a.e. in $\mathbb{R}^{n}$ as $R \rightarrow + \infty$, and moreover
    \begin{equation*}
        \frac{|\xi_R(x)-\xi_R(y)|^p\eta_R^p(x)}{|x-y|^{n+sp}} \leq  \frac{2^p|u_0(x)-u_0(y)|^p}{|x-y|^{n+sp}} \in L^1(\mathbb{R}^{2n}\setminus (\mathbb{R}^n\setminus \Omega)^2).
    \end{equation*}
Thus, by the Dominated Convergence Theorem, it follows that
    \begin{equation}\label{y84yghjabvja-fejwjkf6543frgehdju435irufw}
     \lim_{R\to+\infty}   \int_{\mathbb{R}^{2n}\setminus (\mathbb{R}^n\setminus \Omega)^2} \frac{{|\xi_R(x)-\xi_R(y)|}^p\eta_R^p(x)}{|x-y|^{n+sp}} \ dxdy= 0.
    \end{equation}
    
Furthermore, since $\Omega$ is bounded, there exists $R_0>0$ such that $\Omega \subset B_{R_0}$. Therefore, we can choose $R>2R_0$ in such a way that 
\begin{equation}\label{cdst4893ytfklhjYUdh3}
{\mbox{$\eta_R(x)=1$ for any $x\in \Omega$.}}\end{equation}
    
    On this account,
    \begin{equation*}\begin{split}
        &\int_{\mathbb{R}^{2n}\setminus (\mathbb{R}^n\setminus \Omega)^2} \frac{|\xi_R(y)|^p |\eta_R(x)-\eta_R(y)|^p}{|x-y|^{n+sp}} \ dxdy 
        \\&=\int_{\Omega\times (\mathbb{R}^n\setminus B_R)} \frac{|\xi_R(y)|^p |1-\eta_R(y)|^p}{|x-y|^{n+sp}} \ dxdy 
        +\int_{(\mathbb{R}^n\setminus B_R)\times\Omega} \frac{|\xi_R(y)|^p |\eta_R(x)-1|^p}{|x-y|^{n+sp}} \ dxdy .
            \end{split}
    \end{equation*}
    We also observe that
    $$ |\xi_R(y)|^p= |\xi_R(y)-\xi_R(x)+\xi_R(x)|^p\leq
    2^p\Big(|\xi_R(y)-\xi_R(x)|^p+|\xi_R(x)|^p\Big)$$
    and accordingly
    \begin{equation}\label{cnr3727fhawDFGHdiewuifhl8760}
    \begin{split}
        &\int_{\mathbb{R}^{2n}\setminus (\mathbb{R}^n\setminus \Omega)^2} \frac{|\xi_R(y)|^p |\eta_R(x)-\eta_R(y)|^p}{|x-y|^{n+sp}} \ dxdy 
        \\&\leq 2^p\int_{\Omega\times (\mathbb{R}^n\setminus B_R)} \frac{|\xi_R(x)-\xi_R(y)|^p |1-\eta_R(y)|^p}{|x-y|^{n+sp}} \ dxdy 
        \\&\qquad+(2^p+1)\int_{(\mathbb{R}^n\setminus B_R)\times\Omega} \frac{|\xi_R(y)|^p |\eta_R(x)-1|^p}{|x-y|^{n+sp}} \ dxdy .
            \end{split}
    \end{equation}
    
Since~$\xi_R(x)\rightarrow 0$ a.e. in~$\mathbb{R}^{n}$ as~$R \rightarrow + \infty$ and
    \begin{equation*}
    \frac{|\xi_R(x)-\xi_R(y)|^p |1-\eta_R(y)|^p}{|x-y|^{n+sp}}\leq
  \frac{2^p|u_0(x)-u_0(y)|^p}{|x-y|^{n+sp}} \in L^1(\Omega\times(\mathbb{R}^{n}\setminus B_R)),
    \end{equation*}
we can employ the Dominated Convergence Theorem and obtain that
   \begin{equation*}
    \lim_{R\to+\infty}   \int_{\Omega\times (\mathbb{R}^n\setminus B_R)} \frac{|\xi_R(x)-\xi_R(y)|^p |1-\eta_R(y)|^p}{|x-y|^{n+sp}} \ dxdy = 0.
    \end{equation*}
   From this and~\eqref {cnr3727fhawDFGHdiewuifhl8760} we conclude that
  \begin{equation}\label{cnr3727fhawDFGHdiewuifhl876}
    \begin{split}
        &\lim_{R\to+\infty}\int_{\mathbb{R}^{2n}\setminus (\mathbb{R}^n\setminus \Omega)^2} \frac{|\xi_R(y)|^p |\eta_R(x)-\eta_R(y)|^p}{|x-y|^{n+sp}} \ dxdy 
        \\&\qquad\leq (2^p+1)\lim_{R\to+\infty}\int_{(\mathbb{R}^n\setminus B_R)\times\Omega} \frac{|\xi_R(y)|^p |\eta_R(x)-1|^p}{|x-y|^{n+sp}} \ dxdy .
            \end{split}
    \end{equation}  

Moreover, if~$x\in\mathbb{R}^n\setminus B_R$ and~$y\in \Omega$, then
$$|x-y| \geq |x|-|y| \geq \frac{|x|}{2}.$$
As a consequence,
\begin{eqnarray*}
\int_{(\mathbb{R}^n\setminus B_R)\times\Omega} \frac{|\xi_R(y)|^p |\eta_R(x)-1|^p}{|x-y|^{n+sp}} \ dxdy
&\leq&
2^{n+sp}\int_{\mathbb{R}^n\setminus B_R}\frac{dx}{|x|^{n+sp}}\int_\Omega |\xi_R(y)|^p \ dy\\
&\leq&\frac{C(n,s,p) \|u_0\|_{L^p(\Omega)}^p}{R^{sp}}.
\end{eqnarray*}
Plugging this into~\eqref{cnr3727fhawDFGHdiewuifhl876}, we obtain that
\begin{equation*}\lim_{R\to+\infty}
\int_{\mathbb{R}^{2n}\setminus (\mathbb{R}^n\setminus \Omega)^2} \frac{|\xi_R(y)|^p |\eta_R(x)-\eta_R(y)|^p}{|x-y|^{n+sp}} \ dxdy 
\leq \lim_{R\to+\infty} \frac{C(n,s,p) \|u_0\|_{L^p(\Omega)}^p}{R^{sp}}=0.
    \end{equation*}
 
 This, \eqref{y84yghjabvja-fejwjkf6543frgehdju435irufw0}  
    and~\eqref{y84yghjabvja-fejwjkf6543frgehdju435irufw}
    give that
    \begin{equation}\label{J_tildeTER}
    \lim_{R\to+\infty} \int_{\mathbb{R}^{2n}\setminus (\mathbb{R}^n\setminus \Omega)^2} \frac{|\omega_{R,1}(x)-\omega_{R,1}(y)|^p}{|x-y|^{n+sp}} \ dxdy=0.
    \end{equation}
    
    We now turn our attention to the term $\omega_{R,2}$. First of all, by the definition of $\omega_{R,2}$, it follows that $\operatorname{supp}\{\omega_{R,2}\} \subset \mathbb{R}^n\setminus B_R$. Hence, recalling~\eqref{cdst4893ytfklhjYUdh3},
    \begin{equation}\label{ceriwfgukewti43u9876098760}
   \int_{\mathbb{R}^{2n}\setminus (\mathbb{R}^n\setminus \Omega)^2} \frac{|\omega_{R,2}(x)-\omega_{R,2}(y)|^p}{|x-y|^{n+sp}} \ dxdy = 2  \,\widetilde I_R
    \end{equation}
    where
    \begin{equation*}  \widetilde I_R:=
      \int_{(\mathbb{R}^n\setminus B_R)\times\Omega } \frac{|\omega_{R,2}(x)-\omega_{R,2}(y)|^p}{|x-y|^{n+sp}} \ dxdy .
    \end{equation*}

    We now focus on proving that 
    \begin{equation}\label{ceriwfgukewti43u987609876}\lim_{R\to+\infty}\widetilde I_R = 0
    .\end{equation} By the definition of $\omega_{R,2}$ and by convexity, we have
    \begin{equation*}
    \begin{split}
        |\omega_{R,2}(x) - \omega_{R,2}(y)|^p &= |u_0(x)(\eta_R(x)-1) - u_0(y)(\eta_R(y)-1)|^p\\
        &\leq 2^{p-1}\Big(|u_0(x)-u_0(y)|^p|\eta_R(x)-1|^p + |u_0(y)|^p|\eta_R(x)-\eta_R(y)|^p\Big).
    \end{split}
    \end{equation*}
    Therefore, 
    \begin{equation*}
    \begin{split}
        \widetilde I_R &\leq 2^{p} \int_{(\mathbb{R}^n\setminus B_R)\times\Omega } \frac{|u_0(x)-u_0(y)|^p|\eta_R(x)-1|^p}{|x-y|^{n+sp}} \ dxdy\\
        &\qquad + 2^{p}\int_{(\mathbb{R}^n\setminus B_R)\times\Omega }
         |u_0(y)|^p \frac{|\eta_R(x)-\eta_R(y)|^p}{|x-y|^{n+sp}} \ dxdy
         .
    \end{split}
    \end{equation*}
    
    Since $\eta_R \rightarrow 1$ as $R\rightarrow +\infty$ and
        \begin{equation*}
        \frac{|u_0(x)-u_0(y)|^p|\eta_R(x)-1|^p}{|x-y|^{n+sp}} \leq \frac{|u_0(x)-u_0(y)|^p}{|x-y|^{n+sp}} \in L^1((\mathbb{R}^n \setminus  B_R) \times \Omega),
    \end{equation*}
by the Dominated Convergence Theorem we deduce that 
\begin{equation*}
\lim_{R\to+\infty}
\int_{(\mathbb{R}^n\setminus B_R)\times\Omega } \frac{|u_0(x)-u_0(y)|^p|\eta_R(x)-1|^p}{|x-y|^{n+sp}} \ dxdy=0.
\end{equation*}

Furthermore,    \begin{eqnarray*}
\int_{(\mathbb{R}^n\setminus B_R)\times\Omega }
         |u_0(y)|^p \frac{|\eta_R(x)-\eta_R(y)|^p}{|x-y|^{n+sp}} \ dxdy &\leq& \int_\Omega |u_0(y)|^p\ dy
         \int_{\mathbb{R}^n\setminus B_R} \frac{2^{n+sp}\,dx}{|x|^{n+sp}} \\& \leq&\frac{ C(n,s,p) \|u_0\|_{L^p(\Omega)} }{R^{sp}}.
    \end{eqnarray*}
    Gathering these pieces of information, we deduce~\eqref{ceriwfgukewti43u987609876}.
    
 From~\eqref{ceriwfgukewti43u9876098760}
 and~\eqref{ceriwfgukewti43u987609876}, we thus conclude that
 $$ \lim_{R\to+\infty}\int_{\mathbb{R}^{2n}\setminus (\mathbb{R}^n\setminus \Omega)^2} \frac{|\omega_{R,2}(x)-\omega_{R,2}(y)|^p}{|x-y|^{n+sp}} \ dxdy 
    =0.$$
    This, \eqref{J_tildeBIS} and~\eqref{J_tildeTER} entail~\eqref{J_tilde}.

    Moreover, by the definition of $\omega_R$ and by Taylor's expansion, it follows that
    \begin{equation*}
    \begin{split}
        &\int_{\mathbb{R}^{2n}\setminus (\mathbb{R}^n\setminus \Omega)^2} \frac{|u_{0,R}(x)-u_{0,R}(y)|^p}{|x-y|^{n+sp}} \ dxdy\\
        &\quad= \int_{\mathbb{R}^{2n}\setminus (\mathbb{R}^n\setminus \Omega)^2} \frac{|u_0(x)-u_0(y)|^p}{|x-y|^{n+sp}} \ dxdy\\
        &\qquad\qquad+p\int_{\mathbb{R}^{2n}\setminus (\mathbb{R}^n\setminus \Omega)^2} \frac{|u_0(x)-u_0(y)|^{p-2}(u_0(x)-u_0(y))(\omega_R(x)-\omega_R(y))}{|x-y|^{n+sp}} \ dxdy\\
        &\qquad\qquad + \int_{\mathbb{R}^{2n}\setminus (\mathbb{R}^n\setminus \Omega)^2} \frac{R(u_0,\omega_R)}{|x-y|^{n+sp}} \ dxdy\\
        &\quad =: I_{R,1}+I_{R,2}+I_{R,3},
    \end{split}
    \end{equation*}
    where the reminder $R(u_0,\omega_R)$ satisfies
    \begin{equation*}
        |R(u_0,\omega_R)|\leq \begin{cases}
        C_p \left(|u_0(x)-u_0(y)|^{p-2}|\omega_R(x)- \omega_R(y)|^2+ |\omega_R(x)- \omega_R(y)|^p \right), \quad & \text{ if } \ p >2,
    \\ C_p |\omega_R(x)- \omega_R(y)|^p, \quad & \text{ if } \ p \leq 2.\end{cases}
    \end{equation*}
    By \eqref{J_tilde}, and by H\"older's inequality with exponents $(p/p-1,p)$, it follows that
    \begin{equation*}
        |I_{R,2}| \leq p I_1^{\frac{p-1}{p}} \widetilde J_R^{\frac{1}{p}} \rightarrow 0 \quad \text{ as } R \rightarrow + \infty.
    \end{equation*}
    In addition, if $p > 2$, by H\"older's inequality with exponents $(p/p-2,p/2)$, we have
    \begin{equation*}
        |I_{R,3}| \leq C_p\left( I_1^{\frac{p-2}{p}}\widetilde{J}^{\frac{2}{p}} + \widetilde J_R\right) \rightarrow 0 \quad \text{ as } R \rightarrow + \infty.
    \end{equation*}
    If instead $p \leq 2$, the following estimate holds
    \begin{equation*}
        |I_{R,3}| \leq C_p \widetilde J_R \rightarrow 0 \quad \text{ as } R \rightarrow + \infty.
    \end{equation*}
    Hence, we conclude that
    \begin{equation*}
        \lim_{R \rightarrow + \infty} \int_{\mathbb{R}^{2n}\setminus (\mathbb{R}^n\setminus \Omega)^2} \frac{|u_{0,R}(x)-u_{0,R}(y)|^p}{|x-y|^{n+sp}} \ dxdy = \int_{\mathbb{R}^{2n}\setminus (\mathbb{R}^n\setminus \Omega)^2} \frac{|u_{0}(x)-u_{0}(y)|^p}{|x-y|^{n+sp}} \ dxdy.
    \end{equation*}
    
    Furthermore, by the Dominated Convergence Theorem, we have
    \begin{equation}\label{4830jdshmDFGHJui3yri3}
        \lim_{R\rightarrow + \infty}\int_\Omega  |u_{0,R}|^p dx = \int_\Omega |u_0|^p dx.
    \end{equation}
    As a consequence, we deduce that $u_{0,R} \rightarrow u_0$ in $W^{s,p}(\Omega)$ as $R \rightarrow + \infty$. In particular, since~$f \in (W^{s,p}(\Omega))^*$,
    \begin{equation}\label{4830jdshmDFGHJui3yri32}
      \lim_{R\rightarrow + \infty}  \int_\Omega f u_{0,R} dx =\int_\Omega f u_0 dx .
    \end{equation}
 The desired convergence
 result then follows from~\eqref{4830jdshmDFGHJui3yri3}
 and~\eqref{4830jdshmDFGHJui3yri32}.
\end{proof}

We are now in the position to prove Theorem~\ref{N_main}.

\begin{proof}[Proof of Theorem~\ref{N_main}]
   Testing the definition of $E_\alpha$ with the function $u_{0,R} \in \mathcal{W}^{s,p}$, defined in~\eqref{deffunctiouzero000}, we obtain the following upper bound
    \begin{equation*}
    \begin{split}
        E_\alpha &\leq \frac{c_{n,s,p}}{2p} \int_{\mathbb{R}^{2n}\setminus (\mathbb{R}^n\setminus \Omega)^2} \frac{|u_{0,R}(x)-u_{0,R}(y)|^p}{|x-y|^{n+sp}} \ dxdy + \frac{\alpha}{q} \int_{\mathbb{R}^n \setminus \Omega} |u_{0,R}|^q \ dx - \int_\Omega f u_{0,R} \ dx\\
        &= J_0(u_{0,R})  + \frac{\alpha}{q} \int_{\mathbb{R}^n \setminus \Omega} |u_{0,R}|^q \ dx.
    \end{split}
    \end{equation*}
    Moreover, for any $\alpha > 0$, we know that $E_\alpha \geq J_0(u_\alpha)$. Therefore,
    \begin{equation}\label{cnewytu4583ui-7654tr3e}
        E_0 \leq E_\alpha \leq J_0(u_{0,R})  + \frac{\alpha}{q} \int_{\mathbb{R}^n \setminus \Omega} |u_{0,R}|^q \ dx.
    \end{equation}
    
    We observe that~$u_{0,R}\in L^q(\mathbb{R}^n \setminus \Omega)$ for any~$R>0$. As a result,
passing to the limit as $\alpha \rightarrow 0^+$ in~\eqref{cnewytu4583ui-7654tr3e}, we obtain
    \begin{equation*}
        E_0 \leq \liminf_{\alpha \rightarrow 0^+} E_\alpha \leq \limsup_{\alpha \rightarrow 0^+} E_\alpha \leq J_0(u_{0,R}).
    \end{equation*}
    Finally, taking the limit as $R \rightarrow + \infty$ and exploiting Lemma~\ref{convCO}, it follows that
    \begin{equation*}
        \lim_{\alpha \rightarrow 0^+} E_\alpha = E_0,
    \end{equation*} as desired.
\end{proof}
\subsection{The case \texorpdfstring{$\int_\Omega fdx \neq 0$}{int f =/ 0}}
In this case, the Neumann problem does not admit a solution, and the behavior of $E_\alpha$ as $\alpha \rightarrow 0^+$ turns out to be completely different. Here we provide the details of
the proof of Theorem~\ref{int_f_div}.

\begin{proof}[Proof of Theorem~\ref{int_f_div}]
    Let~$\eta_R \in C_c^\infty(B_{2R},[0,1])$, with~$\eta_R=1 $, in~$B_R$. Notice that, since $\Omega$ is bounded, there exists $R_0>0$ such that $\Omega \subset B_{R_0}$. Thus, we choose $R>2R_0$, in such a way that~$\eta_R=1$ in~$\Omega$.
    
    Let also $v_t(x) := t \eta_R(x)$, for $x \in \mathbb{R}^n$ and $t\in \mathbb{R}$, and set
    \begin{eqnarray*}
    \beta_R&:=&\frac{c_{n,s,p}}{2p} \int_{\mathbb{R}^{2n}\setminus (\mathbb{R}^n\setminus \Omega)^2} \frac{|\eta_R(x)-\eta_R(y)|^p}{|x-y|^{n+sp}} \ dxdy,\\
   \gamma_R&:=& \int_{\mathbb{R}^n \setminus \Omega} \eta_R^q(x) \ dx ,\\
   {\mbox{and }}\quad F&:=& \int_\Omega f\ dx.
    \end{eqnarray*} 
      It follows that
    \begin{eqnarray*}&&
        \frac{c_{n,s,p}}{2p} \int_{\mathbb{R}^{2n}\setminus (\mathbb{R}^n\setminus \Omega)^2} \frac{|v_t(x)-v_t(y)|^p}{|x-y|^{n+sp}} \ dxdy =|t|^p \beta_R,\\
&&        \frac{\alpha}{q} \int_{\mathbb{R}^n \setminus \Omega} |v_t(x)|^q dx = \frac{\alpha}{q} |t|^q \gamma_R,\\
{\mbox{and }} &&        \int_\Omega fv_t \ dx = tF.
    \end{eqnarray*}
    Thus, collecting the previous identities, we obtain
    \begin{equation}\label{min_on_t}
        J_\alpha(v_t) = |t|^p \beta_R + |t|^q\frac{\alpha}{q}\gamma_R - tF.
    \end{equation}
    
    We start considering the case $p=q$. A direct computation shows that $J_\alpha(v_t)$ is minimized at
    \begin{equation*}
        t_\alpha: = \frac{\operatorname{sign}{F}}{q^{\frac{1}{q-1}}}\left(\frac{|F|}{\beta_R + \frac{\alpha}{q} \gamma_{R}}\right)^{\frac{1}{q-1}}.
    \end{equation*}
    Hence, testing the definition of $E_\alpha$ with the function $v_{t_\alpha}$, we get
    \begin{equation}\label{stimainf}
        E_\alpha \leq J_\alpha(v_{t_\alpha}) =-\frac{q-1}{q} |F| \left(\frac{|F|}{q\beta_R + \alpha \gamma_R}\right)^{\frac{1}{q-1}}.
    \end{equation}
   
    Let us now prove an upper bound for $J_\alpha(v_{t_\alpha})$. By the properties of the cutoff function $\eta_R$, we have
    \begin{equation}\label{bound1}
        \gamma_{R}= \int_{\mathbb{R}^n \setminus \Omega} \eta_R^q (x)\ dx = \int_{B_{2R} \setminus \Omega} \eta_R^q(x) \ dx \leq CR^n,
    \end{equation}
    and
     \begin{equation}\label{bound2}
    \begin{split}
      &  \beta_R =  \frac{c_{n,s,p}}{2p} \int_{\mathbb{R}^{2n}\setminus (\mathbb{R}^n\setminus \Omega)^2} \frac{|\eta_R(x)-\eta_R(y)|^p}{|x-y|^{n+sp}} \ dxdy = \frac{c_{n,s,p}}{p} \int_{\Omega\times (\mathbb{R}^{n}\setminus \Omega)} \frac{|1-\eta_R(y)|^p}{|x-y|^{n+sp}} \ dxdy
    \\&\quad\leq \frac{c_{n,s,p}}{p} \int_{\Omega\times 
    (\mathbb{R}^{n}\setminus B_R)} \frac{dx dy}{|x-y|^{n+sp}}  \leq \frac{2^{n+sp}c_{n,s,p}|\Omega| }{p} \int_{\mathbb{R}^n\setminus B_R} \frac{dy}{|y|^{n+sp}} \\
        &\quad\leq \frac{2^{n+sp}c_{n,s,p} |\Omega|\,|\partial B_1|}{sp^2 R^{sp}}.
    \end{split}
    \end{equation}
Hence, we obtain the following bound
    \begin{equation}\label{boundsopra}
        q \beta_R +  \alpha \gamma_{R} \leq \frac{C_1}{ R^{sp}} + C_2 \alpha R^n,
    \end{equation}
    where $C_1 = C_1(n,s,p,q,|\Omega|)$ and $C_2=C_2(n)$ are positive constants.
    
    Let us now choose~$
        R:= \alpha ^{- \frac{1}{n+sp}}$.
    Thus, using the estimate \eqref{boundsopra}, the inequality \eqref{stimainf} becomes
    \begin{equation*}
        E_\alpha \leq - \overline{ C} |F|^{\frac{q}{q-1}}\alpha^{-\frac{sp}{(n+sp)(q-1)}},
    \end{equation*}
    where $\overline{ C}= \overline{ C}(n,s,p,q,|\Omega|)$ is a positive constant.
    
    Finally, passing to the limit as $\alpha \rightarrow 0^+$, we conclude that
    \begin{equation*}
        \lim_{\alpha \rightarrow 0^+} E_\alpha = -\infty.
    \end{equation*}
    
    Let us now deal with the case $p \neq q$. The study of the behavior of the functional $E_\alpha$ becomes more involved, since the minimum of the expression in \eqref{min_on_t} with respect to $t$ cannot be determined explicitly. To overcome this difficulty, we make a suitable choice of $t$, which will become clear later, namely,
    \begin{equation*}
        t = t_m := \alpha^{\frac{1+\beta(n+sp)}{p-q}}\operatorname{sign}{F},
    \end{equation*}
    where
    \begin{equation}\label{PD}
        \beta \in \left(\min\left\{-\frac{1}{n+sp},-\frac{1}{n+sq}\right\}, \max\left\{-\frac{1}{n+sp},-\frac{1}{n+sq}\right\}\right).
    \end{equation}
    With this choice, and using the bounds \eqref{bound1} and \eqref{bound2}, we deduce that
    \begin{equation}\label{PD1}
    \begin{split}
        J_\alpha(v_{t_m}) &= \alpha^{\frac{p(1+\beta(n+sp))}{p-q}}{|\operatorname{sign}{F}|}^p \beta_R + \alpha^{\frac{q(1+\beta(n+sp))}{p-q}}|\operatorname{sign}{F}|^q \frac{\alpha}{q} \gamma_R - \alpha^{\frac{1+\beta(n+sp)}{p-q}}|F|\\
        &\leq C_1 \alpha^{\frac{p(1+\beta(n+sp))}{p-q}}{|\operatorname{sign}{F}|}^p R^{-sp} + C_2\alpha^{\frac{q(1+\beta(n+sp))}{p-q}}|\operatorname{sign}{F}|^q \frac{\alpha}{q} R^n - \alpha^{\frac{1+\beta(n+sp)}{p-q}}|F|.
    \end{split}
    \end{equation}
We choose here~$
        R := \alpha^\beta$,
    with $\beta < 0$ as in~\eqref{PD}. Therefore, the estimate in~\eqref{PD1} becomes
    \begin{equation*}
        J_\alpha(v_{t_m}) \leq C_1 \alpha^{\frac{p(1+\beta(n+sp))}{p-q}-sp\beta}|\operatorname{sign}{F}|^p + C_2\alpha^{\frac{q(1+\beta(n+sp))}{p-q}+1+\beta n}|\operatorname{sign}{F}|^q - \alpha^{\frac{1+\beta(n+sp)}{p-q}}|F|.
    \end{equation*}
    With this particular choice of the parameter $\beta$, it is straightforward to verify that
    \begin{equation*}
        \lim_{\alpha \rightarrow 0^+} \alpha^{\frac{p(1+\beta(n+sp))}{p-q}-sp\beta} = \lim_{\alpha\rightarrow 0^+} \alpha^{\frac{q(1+\beta(n+sp))}{p-q}+1+\beta n} =0
    \end{equation*}
    and
    \begin{equation*}
        \lim_{\alpha \rightarrow 0^+} \alpha^{\frac{1+\beta(n+sp)}{p-q}} = + \infty.
    \end{equation*}
    Therefore, the proof concludes by taking the limit as $\alpha \to 0^+$.
\end{proof}

\begin{remark}
    The use of the cutoff function $\eta_R \in C_c^\infty(\mathbb{R}^n)$ in the proof of Theorem~\ref{int_f_div} is necessary because constant functions do not belong to the space $\mathcal{W}^{s,p}$. Therefore, they cannot be used directly to test the definition of the energy $E_\alpha$.
\end{remark}
\appendix
\section{}\label{A}

\begin{proposition}\label{minimo_weak}
    Let $\alpha > 0$ and $f\in (\mathcal{W}^{s,p})^*$. Let~$J_\alpha : \mathcal{W}^{s,p} \rightarrow \mathbb{R}$ be the functional defined in \eqref{FUNCTIONAL}.
    
    Then, any critical point of $J_\alpha$ is a weak solution of \eqref{pb:RF}.
\end{proposition}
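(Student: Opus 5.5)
The plan is to show that $J_\alpha$ is Gateaux differentiable on $\mathcal{W}^{s,p}$ and that its derivative at $u$ in a direction $\varphi$ is exactly the left-hand side of Definition~\ref{DEBOLE} minus $\int_\Omega f\varphi\,dx$. Once this is done, a critical point $u$ satisfies $\langle J_\alpha'(u),\varphi\rangle=0$ for every $\varphi\in\mathcal{W}^{s,p}$, which is precisely the weak formulation. In particular this applies to the minimizer $u_\alpha$: for fixed $\varphi$, the real function $t\mapsto J_\alpha(u_\alpha+t\varphi)$ is finite on all of $\mathbb{R}$ (since $\mathcal{W}^{s,p}$ is a vector space) and has a minimum at $t=0$, so its derivative there vanishes.

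\textbf{The three terms.} We fix $u,\varphi\in\mathcal{W}^{s,p}$ and treat the terms of $J_\alpha$ separately.

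\begin{itemize}
\item The linear term gives $-\langle f,\varphi\rangle$ directly, since $f\in(\mathcal{W}^{s,p})^*$.
\item For the Gagliardo term, set $a=u(x)-u(y)$ and $b=\varphi(x)-\varphi(y)$. For $0<|t|\le 1$ the difference quotient $\frac{|a+tb|^p-|a|^p}{t}$ converges pointwise to $p|a|^{p-2}ab$. By the mean value theorem it is bounded by $p(|a|+|b|)^{p-1}|b|\le C_p(|a|^{p-1}|b|+|b|^p)$. After division by $|x-y|^{n+sp}$, this bound is integrable on $\mathbb{R}^{2n}\setminus(\mathbb{R}^n\setminus\Omega)^2$ by Hölder's inequality with exponents $(p/(p-1),p)$, because $[u]_{W^{s,p}(\Omega)}$ and $[\varphi]_{W^{s,p}(\Omega)}$ are finite. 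Dominated convergence then gives the derivative
$$\frac{c_{n,s,p}}{2}\int_{\mathbb{R}^{2n}\setminus(\mathbb{R}^n\setminus\Omega)^2}\frac{|a|^{p-2}ab}{|x-y|^{n+sp}}\,dx\,dy.$$
\item The exterior term works the same way with exponent $q$. The quotient $\frac{|u+t\varphi|^q-|u|^q}{t}$ is dominated by $C_q(|u|^{q-1}|\varphi|+|\varphi|^q)$, which lies in $L^1(\mathbb{R}^n\setminus\Omega)$ by Hölder's inequality with exponents $(q/(q-1),q)$. This gives the derivative $\alpha\int_{\mathbb{R}^n\setminus\Omega}|u|^{q-2}u\varphi\,dx$.
\end{itemize}

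Summing the three contributions and setting the result to zero yields Definition~\ref{DEBOLE}.

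\textbf{Main obstacle.} The only delicate point is the domination in the nonlocal term for $1<p<2$, where $t\mapsto|t|^p$ is not $C^2$. The mean-value bound above uses only that $|\cdot|^p$ is $C^1$ with derivative of size $p|\cdot|^{p-1}$, so it remains valid in this range and no case split is needed.

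The symmetric domain $\mathbb{R}^{2n}\setminus(\mathbb{R}^n\setminus\Omega)^2$ requires no special care, since everything is computed pointwise in $(x,y)$. If one additionally wants the strong form $\alpha|u|^{q-2}u+N^p_su=0$ a.e. outside $\overline\Omega$, as used in Lemma~\ref{DECAY:TH}, one tests with $\varphi$ supported in $\mathbb{R}^n\setminus\overline\Omega$. Using the symmetry of the integrand, the double integral then reduces to $\int_{\mathbb{R}^n\setminus\Omega}N^p_su\,\varphi\,dx$, and the fundamental lemma of the calculus of variations gives the pointwise identity.
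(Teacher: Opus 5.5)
Your proposal is correct and follows the paper's argument: you compute the first variation of $J_\alpha$ term by term and observe that its vanishing is exactly the identity in Definition~\ref{DEBOLE}. The only difference is technical: the paper expands $|\cdot|^p$ and $|\cdot|^q$ with explicit Taylor remainders, splitting into the cases $p>2$, $p\le 2$ and $q>2$, $q\le 2$, and shows by H\"older that the remainders are $o(\varepsilon)$, whereas you dominate the difference quotients via the mean value theorem and apply dominated convergence, which avoids the case split.
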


\begin{proof}
    Firstly, since $u \in \mathcal{W}^{s,p}$ and $f \in (\mathcal{W}^{s,p})^*$, the functional $J_\alpha$ is well defined for any $\alpha > 0$. \\
 Now, we compute the first variation of $J_\alpha$. Let $|\varepsilon|<1$ and $\varphi \in \mathcal{W}^{s,p}$, then $u+\varepsilon \varphi \in \mathcal{W}^{s,p}$. Therefore, we are now in the position to compute 
    \begin{equation*}
    \begin{split}
        J_\alpha(u+\varepsilon v) &= \frac{c_{n,s,p}}{2p} \int_{\mathbb{R}^{2n}\setminus (\mathbb{R}^n\setminus \Omega)^2} \frac{|(u + \varepsilon \varphi)(x)-(u + \varepsilon \varphi)(y)|^p}{|x-y|^{n+sp}} \ dxdy + \frac{\alpha}{q} \int_{\mathbb{R}^n \setminus \Omega} |u + \varepsilon \varphi|^q \ dx\\
        &\qquad\qquad- \int_\Omega f (u + \varepsilon \varphi) \ dx.
    \end{split}
    \end{equation*}
    
    Moreover, by a Taylor expansion, we obtain
    \begin{equation*}
    \begin{split}
        &|(u + \varepsilon \varphi)(x)-(u + \varepsilon \varphi)(y)|^p\\
        &\quad\quad= |u(x)-u(y)|^p + \varepsilon p |u(x)-u(y)|^{p-2}(u(x)-u(y))(\varphi(x)-\varphi(y)) +  R^\varepsilon_p(u,\varphi),
    \end{split}
    \end{equation*}
    and
    \begin{equation*}
        |u+\varepsilon \varphi|^q = |u|^q + \varepsilon q |u|^{q-2}u\varphi +  R^\varepsilon_q(u,\varphi),
    \end{equation*}
    where the reminder $R^\varepsilon_p(u,\varphi)$  satisfies
    \begin{equation*}
        |R^\varepsilon_p(u,\varphi)| \leq\begin{cases}
         C_p (\varepsilon^2|u(x)-u(y)|^{p-2}|\varphi(x)-\varphi(y)|^2 + |\varepsilon|^p|\varphi(x)-\varphi(y)|^p), &\quad \text{ if } \ p >2,\\
         C_p |\varepsilon|^p|\varphi(x)-\varphi(y)|^p, &\quad \text{ if } \ p \leq 2,
  \end{cases}
    \end{equation*}
    and the reminder $R^\varepsilon_q(u,\varphi)$ satisfies
    \begin{equation*}
|R^\varepsilon_q(u,\varphi)| \leq  \begin{cases}
C_q (\varepsilon^2|u|^{q-2}|\varphi|^2 + |\varepsilon|^q|\varphi|^q), &\quad \text{ if } \ q >2,\\
C_q |\varepsilon|^q |\varphi|^q,& \quad \text{ if } \ q \leq 2.
    \end{cases}
    \end{equation*}
    Thus,
    \begin{equation}\label{star489tgjkwel8765}
        \begin{split}
        &J_\alpha(u+\varepsilon v)\\
        &= J_\alpha(u) + \varepsilon \Big(\frac{c_{n,s,p}}{2} \int_{\mathbb{R}^{2n}\setminus (\mathbb{R}^n\setminus \Omega)^2} \frac{|u(x)-u(y)|^{p-2}(u(x)-u(y))(\varphi(x)-\varphi(y))}{|x-y|^{n+sp}} \ dxdy\\
        &\qquad \qquad\qquad+ \alpha \int_{\mathbb{R}^n \setminus \Omega} |u|^{q-2}u\varphi - \int_\Omega f \varphi\Big)\\
        &\qquad\qquad +  \frac{c_{n,s,p}}{2p}\int_{\mathbb{R}^{2n}\setminus (\mathbb{R}^n\setminus \Omega)^2} \frac{R^\varepsilon_p(u,\varphi)}{|x-y|^{n+sp}} \ dxdy
        +  \frac{\alpha}{q} \int_{\mathbb{R}^n \setminus \Omega} R^\varepsilon_q(u,\varphi) \ dx .
    \end{split}
    \end{equation}
    
    In addition,
    as a consequence of the facts that $u$, $ \varphi \in \mathcal{W}^{s,p}$ and~$p$, $q>1$, and by H\"older's inequality, it follows that
    \begin{equation*}
        \lim_{\varepsilon \rightarrow 0} \frac{1}{\varepsilon}\int_{\mathbb{R}^{2n}\setminus (\mathbb{R}^n\setminus \Omega)^2}\frac{{R^\varepsilon_p(u,\varphi)}}{|x-y|^{n+sp}} dxdy= \lim_{\varepsilon \rightarrow 0}\frac1{\varepsilon} \int_{\mathbb{R}^n \setminus \Omega}{R^\varepsilon_q(u,\varphi)dx} =0.
    \end{equation*}
    Taking the limit as $\varepsilon \rightarrow 0$ in~\eqref{star489tgjkwel8765}, we thus deduce that
    \begin{equation*}
    \begin{split}
J_\alpha'(u)[\varphi]        &=\lim_{\varepsilon\rightarrow 0} \frac{J_\alpha(u+\varepsilon v) - J_\alpha(u)}{\varepsilon}\\ 
        &= \frac{c_{n,s,p}}{2} \int_{\mathbb{R}^{2n}\setminus (\mathbb{R}^n\setminus \Omega)^2} \frac{|u(x)-u(y)|^{p-2}(u(x)-u(y))(\varphi(x)-\varphi(y))}{|x-y|^{n+sp}} \ dxdy\\
        &\qquad \qquad\qquad+ \alpha \int_{\mathbb{R}^n \setminus \Omega} |u|^{q-2}u\varphi - \int_\Omega f \varphi,
    \end{split}
    \end{equation*}
   and accordingly, if $u \in \mathcal{W}^{s,p}$ is a critical point of the functional $J_\alpha$, then $u$ is a weak solution of~\eqref{pb:RF} in the sense of Definition~\ref{DEBOLE}.
\end{proof}

\section*{Acknowledgments}
S. Dipierro
is supported by the Australian Future Fellowship FT230100333. 
E. Valdinoci  is supported by the
Australian Laureate Fellowship FL190100081.
Work partially supported by PRIN PNRR P2022YFAJH ``Linear and Nonlinear PDEs: New directions and applications''. G. Spadaro is member of the INdAM-GNAMPA group (``Gruppo Nazionale per 
l'Analisi Matematica, la Probabilit\`a e le loro Applicazioni -- Istituto Nazionale di Alta
Matematica''). G. Spadaro thanks the University of Western Australia for the hospitality during the visiting period in which this work was developed.

\end{document}